\newcommand{\ip}{\int_\mathbb{R}}
\newtheorem{theorem}{Theorem}[section]
\newtheorem{lemma}[theorem]{Lemma}
\newtheorem*{remark}{Remark}
\def\CalO{\mathcal{O}}
\def\bV{{\hat{v}}}
\def\R{\Re e}
\def\I{\Im m}
\begin{document}
\title[Spectrum for Isentropic Gas Dynamics with Capillarity]{On the Shock Wave Spectrum for Isentropic Gas Dynamics with Capillarity}
\author{Jeffrey Humpherys}
\address{Department of Mathematics, Brigham Young University, Provo, UT 84602}
\email{jeffh at math.byu.edu}

\subjclass{35Q30, 35Q35}
\date{\today}
\keywords{gas dynamics, shock wave stability, capillarity}
\thanks{The author thanks Kevin Zumbrun for useful conversations throughout this project.  This work was supported in part by the National Science Foundation, award number DMS-0607721.}

\begin{abstract}
We consider the stability problem for shock layers in Slemrod's model of an isentropic gas with capillarity.  We show that these traveling waves are monotone in the weak capillarity case, and become highly oscillatory as the capillarity strength increases.  Using a spectral energy estimate we prove that small-amplitude monotone shocks are spectrally stable.  We also show, through the use of a novel spectral energy estimate, that monotone shocks have no unstable real spectrum regardless of amplitude; this implies that any instabilities of these monotone traveling waves, if they exist, must occur through a Hopf-like bifurcation, where one or more conjugate pairs of eigenvalues cross the imaginary axis.  We then conduct a systematic numerical Evans function study, which shows that monotone and mildly oscillatory profiles in an adiabatic gas are spectrally stable for moderate values of shock and capillarity strengths.  In particular, we show that the transition from monotone to non-monotone profiles does not appear to trigger any instabilities.
\end{abstract}
\maketitle
\thispagestyle{empty}
\section{Introduction}
\label{int}
We consider Slemrod's model \cite{HaSl,Sl.1,Sl.2} for a one-dimensional isentropic gas with capillarity
\begin{equation}
\begin{split}
\label{slemrod}
v_{t}-u_{x} &=0, \\
u_{t}+p(v)_{x} &= \left(\frac{u_x}{v}\right)_x - d v_{xxx},
\end{split}
\end{equation}
where physically, $v$ is the specific volume, $u$ is the velocity in Lagrangian coordinates, $p(v)$ is the pressure law for an ideal gas, that is $p'(v) < 0$ and $p''(v) > 0$, and the coefficient, $d \geq 0$, accounting for capillarity strength, is constant.  This is model is a generalization of the compressible isentropic Navier-Stokes equations, or the $p$-system with semi-parabolic (or real) viscosity,
\begin{equation}
\begin{split}
\label{int:navierstokes}
v_{t}-u_{x} &=0, \\
u_{t}+p(v)_{x} &= \left(\frac{u_x}{v}\right)_x.
\end{split}
\end{equation}
It has recently been shown that viscous shock wave solutions of \eqref{int:navierstokes} are spectrally stable for all amplitudes in the case of an adiabatic gas law $p(v) = v^{-\gamma}$, $\gamma\in[1,3]$; see \cite{BHRZ,HLZ}.  We remark that this result, together with Mascia \& Zumbrun's work  \cite{MZ.2, MZ.3} implies that viscous shocks are asymptotically orbitally stable (hereafter called nonlinearly stable).   In this paper, we make the first step toward generalizing this work to Slemrod's model by showing that monotone and mildly oscillatory smooth shock profiles of small to moderate amplitude are likewise spectrally stable.

More generally, we are interested in understanding the degree to which the analytic methods used to study shock wave stability in viscous conservation laws extend to viscous-dispersive systems.  We view Slemrod's model as an important test case as it is physically realistic and yet captures some of the essential mathematical hurdles found in more extensive models of compressible fluid flow.  In particular, Slemrod's model is symmetrizable and genuinely coupled, having only semi-parabolic diffusion; see \cite{Hu.2} for details. 

A few notable results in the study of shock wave stability for viscous conservation laws include the works of Kawashima \cite{Ka.1,Ka.2,KS}, who proved that genuinely coupled symmetrizable systems has a stable essential spectrum, the works of Goodman and others \cite{Go.1,MN, KMN, HuZ.1}, who proved small-amplitude spectral stability for viscous shocks through the use of cleverly chosen weighted energy estimates, and the works of Zumbrun and collaborators \cite{ZH,MZ.1,MZ.2, MZ.3, Z.2}, who proved that spectral stability implies nonlinear stability for viscous shocks in conservation laws for both strictly parabolic and semi-parabolic viscosities.  The missing piece in this overall program is to determine whether moderate- and large-amplitude viscous shocks are spectrally stable.  Very recently, however, spectral stability for large-amplitude shocks for \eqref{int:navierstokes} was proven in the case of an adiabatic gas \cite{HLZ}, and spectral stability was numerically demonstrated for the intermediate range through an extensive Evans function study \cite{BHRZ}.  There is some hope that this overall strategy will extend to more general systems of viscous conservation laws and perhaps even viscous-dispersive models.

We remark that Kawashima's admissibility results, mentioned above, were recently extended to viscous-dispersive (and higher-order) systems \cite{Hu.2}.  Also, Howard \& Zumbrun showed that spectral stability implies nonlinear stability for scalar viscous-dispersive conservation laws \cite{HZ}.   However, the remaining pieces of the general program for viscous-dispersive systems, described above, are still open.

This paper is organized as follows:  In Section \ref{prelim}, we set the stage by first proving the existence of shock profiles for \eqref{slemrod} through the use of a Lyapunov function argument.  Then using geometric singular perturbation theory, we show that small-amplitude shock profiles converge to the zero-capillarity case and are thus monotone.  Following that, we use a qualitative ODE argument to show that our profiles are monotone for weak capillarity yet become highly oscillatory as the capillarity strength $d$ increases.   We then provide a short estimate on the derivative bounds of the profile, which are used later in the stability analysis. Finally, we formulate the integrated eigenvalue problem, which makes the stability problem more amenable to analysis; see for example \cite{Go.1,ZH}.  In Section  \ref{spec}, we generalize the work of Matsumura \& Nishihara \cite{MN} and Barker, Humpherys, Rudd, \& Zumbrun \cite{BHRZ} by using a spectral energy estimate to prove that small-amplitude monotone shocks of \eqref{slemrod} are spectrally stable.  In Section \ref{noreal}, we further extend the results in \cite{BHRZ} and offer a short and novel proof that monotone shocks have no unstable real spectrum regardless of amplitude.  This restricts the class of admissible bifurcations for monotone profiles to those of Hopf-type, where one or more conjugate pairs of eigenvalues cross the imaginary axis.  The approach used here is different than many energy methods in that we use a spectral energy estimate that does not appear to have a time-asymptotic equivalent, whereas most energy estimates can be performed in either domain.  In Section \ref{hfb}, we extend the spectral bounds in \cite{BHRZ} to \eqref{slemrod} by proving that high-frequency instabilities cannot occur for adiabatic monotone profiles of any amplitude for $d\leq 1/3$.  Finally in Section \ref{evans}, we carry out a systematic numerical Evans function study showing that adiabatic monotone and mildly oscillatory profiles are spectrally stable for moderate shock and capillarity strengths.

We remark that highly oscillatory profiles in the scalar KDV-Burgers model were shown by Pego, Smerka, \& Weinstein \cite{PSW} to be unstable in certain cases.  Thus for some, perhaps extreme, parameters, one can reasonably expect instabilities to occur in our system as well.  It is challenging, however, with current technology to explore these extreme cases numerically.  We plan on exploring this in the future.

\section{Preliminaries}
\label{prelim}

In this section, we derive the profile ODE and provide a convenient scaling for our analysis.  We prove the existence of shock profiles for \eqref{slemrod} through the use of a Lyapunov function argument.  Then using geometric singular perturbation theory, we show that small-amplitude shock profiles converge to the zero-capillarity case and are thus monotone.  Through a qualitative ODE argument, we then show that profiles are monotone for weak capillarity yet become oscillatory as the capillarity strength $d$ increases beyond the transition point $d_*$.  We then provide a short estimate on the derivative bounds of the profile, which will be used later in the stability analysis. Finally, we formulate the spectral stability problem and change to integrated coordinates making it more amenable to analysis; see for example \cite{Go.1,ZH}.

\subsection{Shock Profiles}
\label{scalingsection}
By a shock layer (or shock profile) of \eqref{slemrod}, we mean a traveling wave solution
\begin{equation*}
\begin{split}
v(x,t)&=\hat{v}(x-s t),\\
u(x,t)&=\hat{u}(x-s t),
\end{split}
\end{equation*}
with asymptotically constant end-states $(\hat{v},\hat{u})(\pm \infty) =(v_{\pm},u_{\pm})$. Rather by translating $x \rightarrow x-s t$, we can instead consider  stationary solutions of
\[
\begin{split}
v_{t} - s v_{x} - u_{x} &= 0,\\
u_{t} - s u_{x} + p(v)_{x} &= \left(\frac{u_x}{v}\right)_x - d v_{xxx}.
\end{split}
\]
Under the rescaling 
$(x,t,u) \rightarrow (-sx, s^2 t, -u/s)$, our system takes the form
\begin{equation}
\begin{split}
\label{trav:cap2}
v_t + v_x - u_x &= 0,\\
u_t + u_x + a  p(v)_{x} &= \left(\frac{u_x}{v}\right)_x - d v_{xxx},
\end{split}
\end{equation}
where $a = 1/s^2$.  Thus, the shock profiles of \eqref{slemrod} are solutions of the ordinary differential equation
\begin{align*}
v' - u' &= 0,\\
u' + a  p(v)' &= \left(\frac{u'}{v}\right)' - d v''',
\end{align*}
subject to the boundary conditions $(v,u)(\pm \infty) =(v_{\pm},u_{\pm})$. This simplifies to
\begin{equation*}
v' + a  p(v)' = \left(\frac{v'}{v}\right)' - d v'''.
\end{equation*}
By integrating from $-\infty$ to $x$, we get our profile equation,
\begin{equation}
\label{profile}
v-v_{-} + a  (p(v)-p(v_{-})) = \frac{v'}{v} - d v'' ,
\end{equation}
where $a$ is found by setting $x=+\infty$, thus yielding the Rankine-Hugoniot condition
\begin{equation}
\label{RH}
a = -\frac{v_+ - v_-}{p(v_+) - p(v_-)}.
\end{equation}
Without loss of generality, we will assume that $0 < v_+ < v_-$.   We remark that small-amplitude shocks  occur when $v_+$ is close to $v_-$ and large-amplitude shocks arise to when $v_+$ nears zero.

\begin{remark}
\label{cap:rmk1}
In the absence of capillarity, that is when $d=0$, the profile equation \eqref{profile} is of first order, and thus has a monotone solution.  As we will show, small values of $d$ likewise yield monotone profiles whereas large values of $d$ produce oscillatory profiles.  We make this precise below.
\end{remark}

\subsection{Adiabatic Gas}
\label{adiabatic}
Although much of the analysis in this paper holds for ideal gases, that is when $p'(v)<0$ and $p''(v)>0$, our numerical study focuses on the special case of an adiabatic gas law,
\begin{equation}
\label{gammalaw}
p(v) = v^{-\gamma}, \gamma\geq 1,
\end{equation}
together with the rescaling
\[
(x,t,v,u,a,d) \rightarrow (\varepsilon x, \varepsilon t, v/\varepsilon, u/\varepsilon,a \varepsilon^{-\gamma-1},\varepsilon^2 d),
\]
where $\varepsilon$ is chosen so that $v_-=1$;  see \cite{BHRZ,HLZ} for more details.  This choice simplifies our analysis in Section \ref{hfb} and also gives the Mach number $M$ the simplifying form $M=1/\sqrt{\gamma a}$.

\subsection{Existence}
We prove existence of profiles by the following Lyapunov function argument.
By writing \eqref{profile} as a first order system, we get
\begin{subequations}
\label{firstorder}
\begin{align}
v' &= w, \label{firstorder1}\\
w' &= \frac{1}{d} \left[ \frac{w - \phi(v) }{v} \right], \label{firstorder2}
\end{align}
\end{subequations}
where
\begin{equation}
\label{phi}
\phi(v)= v(v-v_{-} + a  (p(v)-p(v_{-})).
\end{equation}
The zero-diffusion case is conservative and has a corresponding Hamiltonian that provides us with the needed Lyapunov function.  Specifically, let
\begin{equation}
E(v,w) = \frac{1}{2} w^2 - \frac{1}{d} \int^{v_{-}}_{v} \frac{\phi(\tilde v)}{\tilde v} d\tilde v.
\end{equation}
Since $\phi(v) < 0$ on $(v_+,v_-)$, then $E(v,w)$ is non-negative for $v \in [v_{+},v_{-}]$.  It follows that
\begin{equation}
\frac{d}{dx} E(v(x),w(x))= \nabla E \cdot
(v',w')^T=\frac{w^2}{d v} > 0.
\end{equation}
Hence with diffusion, bounded (homoclinic) orbits at $(v_+,0)$ are pulled into the minimum $(v_{-},0)$ of $E(v,w)$ as $x\rightarrow -\infty$.  Thus there exists a connecting orbit from $v_+$ to $v_-$.

\subsection{The Small-Amplitude Limit}
\label{smallamp}

We now show that small-amplitude shocks of \eqref{slemrod} are monotone and follow the same asymptotic limits as the $d=0$ case presented in \cite{MP,P,BHRZ,HLZ}.  We accomplish this by rescaling and showing, via geometric singular perturbation theory \cite{F,GS,J}, that the profile converges, in the small-amplitude shock limit, to the (monotone) non-dispersive case.  Thus, monotonicity of small-amplitude shocks of \eqref{slemrod} is implied by the monotonicity of the non-dispersive case, as mentioned above.

\begin{lemma}
Small-amplitude shocks of \eqref{slemrod} are monotone for any fixed $d$.
\end{lemma}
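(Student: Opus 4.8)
The plan is to carry out the geometric singular perturbation analysis indicated above: I will exhibit the first-order profile system \eqref{firstorder} as a slow--fast system whose reduced dynamics is a scalar logistic-type equation with a manifestly monotone heteroclinic, and then transfer monotonicity back by Fenichel's theorem. First I would fix $v_-$, introduce the amplitude parameter $\varepsilon := v_- - v_+ > 0$, and rescale so that the profile ranges over a fixed interval: set $v = v_- - \varepsilon z$ with $z \in [0,1]$, together with the stretched variable $X = \varepsilon x$. Because $\phi$ vanishes at both $v_-$ and $v_+$ (combine \eqref{phi} with the Rankine--Hugoniot relation \eqref{RH}), a Taylor expansion about $v_-$ using $p''>0$ yields $\phi(v) = -c\,\varepsilon^2\, z(1-z) + O(\varepsilon^3)$, where $c := \tfrac12 v_-\, a\, p''(v_-) > 0$ with $a$ as in \eqref{RH}. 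Inserting the induced scalings $v' = -\varepsilon^2 z_X$, $w = \varepsilon^2 W$ into \eqref{firstorder} and dividing the $w$-equation by $\varepsilon^2$ gives, to leading order,
\[
z_X = -W, \qquad \varepsilon\, W_X = \frac{1}{d\,v_-}\bigl(W + c\,z(1-z)\bigr) + O(\varepsilon),
\]
which is in Fenichel normal form with slow variable $z$ and fast variable $W$. (The rescaling divides by $d$, so $d = 0$ must be treated separately; but there \eqref{profile} is the first-order equation $v' = \phi(v)$ with $\phi < 0$ on $(v_+,v_-)$, hence monotone, as already noted.)

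Next I would analyze the reduced problem. The critical manifold is $\mathcal M_0 = \{\,W = -c\,z(1-z)\,\}$, and it is normally hyperbolic because the $W$-derivative of the fast right-hand side equals $1/(d v_-) \neq 0$ on $\mathcal M_0$. Fenichel's theorem then provides, for all small $\varepsilon$, a locally invariant slow manifold $\mathcal M_\varepsilon = \{\,W = h(z,\varepsilon)\,\}$ with $h(\cdot,\varepsilon) \to -c\,z(1-z)$ in $C^1$ on compact sets, carrying the reduced flow $z_X = -h(z,\varepsilon) = c\,z(1-z) + O(\varepsilon)$. The rest points $v = v_\pm$ --- that is, $(z,W) = (0,0)$ and $(1,0)$ --- persist and lie on $\mathcal M_\varepsilon$, so $h(\cdot,\varepsilon)$ vanishes at $z = 0$ and $z = 1$; writing $h(z,\varepsilon) = z\, g(z,\varepsilon)$ near $z = 0$ with $g(0,0) = -c < 0$ (and symmetrically near $z = 1$), and using $C^1$-closeness on compact subsets of $(0,1)$, shows $-h(z,\varepsilon) > 0$ for every $z \in (0,1)$ once $\varepsilon$ is small. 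Thus the reduced vector field is strictly positive on $(0,1)$, so it admits exactly one heteroclinic connection, necessarily strictly monotone, from $z = 0$ to $z = 1$.

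It then remains to verify that the actual shock profile (whose existence was established above) is precisely this connection. Linearizing the full $(z,W)$ system at $(1,0)$ gives determinant $-c/(\varepsilon d v_-) < 0$, so $v = v_+$ is a saddle with eigenvalues $\approx -c$ (slow) and $\approx 1/(\varepsilon d v_-)$ (fast); since the reduced flow on $\mathcal M_\varepsilon$ near $z = 1$ has linearization $-c$, the manifold $\mathcal M_\varepsilon$ is tangent there to the stable eigendirection and hence coincides locally with the stable manifold of $(1,0)$. The profile orbit limits on $(v_+,0)$ as $x \to +\infty$, so it enters along this stable manifold and therefore lies on the invariant set $\mathcal M_\varepsilon$; its $z$-values then decrease monotonically backward toward $0$ and never leave $[0,1]$, so they stay in the domain of $\mathcal M_\varepsilon$, and by the previous step the orbit is strictly monotone. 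Undoing the rescaling, $\hat v$ is strictly decreasing from $v_-$ to $v_+$, which is the lemma; and all the estimates are uniform in $d$ because $d$ enters only through the $O(1)$ coefficient $1/(d v_-)$, giving the conclusion for any fixed $d$.

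I expect the main obstacle to be the endpoint bookkeeping. Away from $z = 0, 1$, monotonicity of the reduced flow is immediate from $C^1$-closeness to the logistic field $c\,z(1-z)$, but near the two rest points one has to invoke their persistence and hyperbolicity --- and the identification of $\mathcal M_\varepsilon$ with the slow stable/unstable fibre through each --- to rule out spurious zeros of $h(z,\varepsilon)$ and to guarantee the profile actually follows the slow manifold all the way to its endstates. Everything else is routine.
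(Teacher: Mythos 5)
Your argument is correct and follows essentially the same route as the paper: rescale by the amplitude $\varepsilon=v_--v_+$, put the profile system \eqref{firstorder} into slow--fast form, observe that the reduced flow is the monotone Burgers/logistic connection (the same reduction as the $d=0$ case), and conclude monotonicity for small amplitude --- your identification of the profile with the slow-manifold orbit through the saddle at $v_+$ merely makes explicit a step the paper treats informally. One caveat: your closing assertion that the estimates are uniform in $d$ is not correct, since the normal hyperbolicity rate $1/(dv_-)$ degenerates as $d\to\infty$ and the admissible amplitude shrinks roughly like $1/d$ (consistent with the transition value $d_*$ in \eqref{dstar}); but as the lemma fixes $d$, this does not affect the conclusion.
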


\begin{proof}
We scale according to the amplitude $\varepsilon = v_{-}-v_{+}$.  Let $\bar{v} = (v-v_{0})/\varepsilon$ and $\bar{x}  = \varepsilon x$, where $v_{0}=v_{-}-\varepsilon \bar{v}_-$.  This frame is chosen so that the end-states of the profile are fixed at $\bar{v}_{\pm} = \mp 1/2$.  Additionally, we expand the pressure term $p(v)$ and the viscosity term $v^{-1}$ about $v_{-}$. Hence \eqref{profile} becomes
\begin{equation}
\label{jxexp}
\begin{split}
&\varepsilon (\bar{v}-\bar{v}_{-}) \left( 1+ a  p(v_{-}) \right) + \varepsilon^2
\frac{a p''(v_{-})}{2}(\bar{v}-\bar{v}_{-})^2 +
\mathcal{O}(\varepsilon^3)(\bar{v}-\bar{v}_{-})^3 \\
& \qquad = \varepsilon^2 \frac{\bar{v}'}{\bar{v}_-} + \mathcal{O}(\varepsilon^3)
(\bar{v}-\bar{v}_{-}) \bar{v}' + \varepsilon^3 d \bar{v}''.
\end{split}
\end{equation}
By expanding the Rankine-Hugoniot equality, $\varepsilon = a   (p(v_{+}) - p(v_{-}))$, about $v_{-}$, we obtain
\begin{equation}
\label{jxexp1}
1+ a  p'(v_{-}) = \frac{a p''(v_{-})}{2}\varepsilon + \mathcal{O}(\varepsilon^2).
\end{equation}
Substituting \eqref{jxexp1} into \eqref{jxexp} and simplifying gives (recall that $\bar{v}_{-} = 1/2$)
\begin{equation}
\frac{a p''(v_{-})}{2}(\bar{v}^2 - \frac{1}{4}) + \varepsilon R(\bar{v},\bar{v}') = \frac{\bar{v}'}{\bar{v}_-} +
\varepsilon^3 d \bar{v}'' \label{jxexp2}.
\end{equation}
where $R(\bar{v},\bar{v}') = \mathcal{O}(1)$.  Thus, in the $\varepsilon=0$ limit,
\eqref{jxexp2} becomes
\begin{equation}
\bar{v}' = \frac{a p''(v_{-})v_-}{2}(\bar{v}^2 - \frac{1}{4})
\label{jxexp3},
\end{equation}
which is essentially the same reduction obtained for the viscous Burgers equation.  Note that the capillarity term vanishes as well, and thus the reduction is the same as the zero-capillarity $(d=0)$ case.

The slow dynamics of \eqref{jxexp2} take the form
\begin{subequations}\label{jxslow}
\begin{align}
\bar{v}' &= \bar{w}, \label{jxslow:1}\\
\varepsilon \bar{w}' &= \frac{1}{d} \left[ \frac{a p''(v_{-})}{2}(\bar{v}^2 - \frac{1}{4}) +
\varepsilon R(\bar{v},\bar{v}')- \frac{\bar{w}}{\bar{v}_-} \right]. \label{jxslow:2}
\end{align}
\end{subequations}
The fast dynamics, obtained by rescaling $x \rightarrow x/\varepsilon$, take the form
\begin{subequations}\label{jxfast}
\begin{align}
\bar{v}' &= \varepsilon \bar{w}, \label{jxfast:1}\\
\bar{w}' &= \frac{1}{d} \left[ \frac{a p''(v_{-})}{2}(\bar{v}^2 - \frac{1}{4}) + \varepsilon R(\bar{v},\bar{v}') - \frac{\bar{w}}{\bar{v}_-} \right]. \label{jxfast:2}
\end{align}
\end{subequations}
We can see from the slow dynamics that solutions will remain on the parabola defined by 
\begin{equation*}
\bar{w} = \frac{a p''(v_{-}) v_-}{2}(\bar{v}^2-\frac{1}{4}).\label{jxexp5}
\end{equation*}
In addition, we can see from the fast dynamics that any jumps will be vertical, that is, $v=$ constant.  Since there are no vertical branches, no jumps occur and thus it follows that small-amplitude shocks approach the solutions for \eqref{jxexp3}.  
Hence, for sufficiently small amplitudes, the profiles are monotone.
\end{proof}

\begin{remark}
In the original scale, small-amplitude profiles of \eqref{int:navierstokes} have the asymptotic properties $|\bV_x|=\CalO(\varepsilon^2)$ and $|\bV_{xx}| = |\bV_x| \CalO(\varepsilon)$, where $\varepsilon = v_--v_+$ is the amplitude; see \cite{MP,P}.  From the above argument, these asymptotic properties hold with our scaling in \eqref{trav:cap2} as well.  It is also straightforward to establish these asymptotic properties directly; see for example Theorem \ref{asympthm} below.
\end{remark}

\begin{figure}[p]
\begin{center}
$\begin{array}{cc}
\includegraphics[width=6cm]{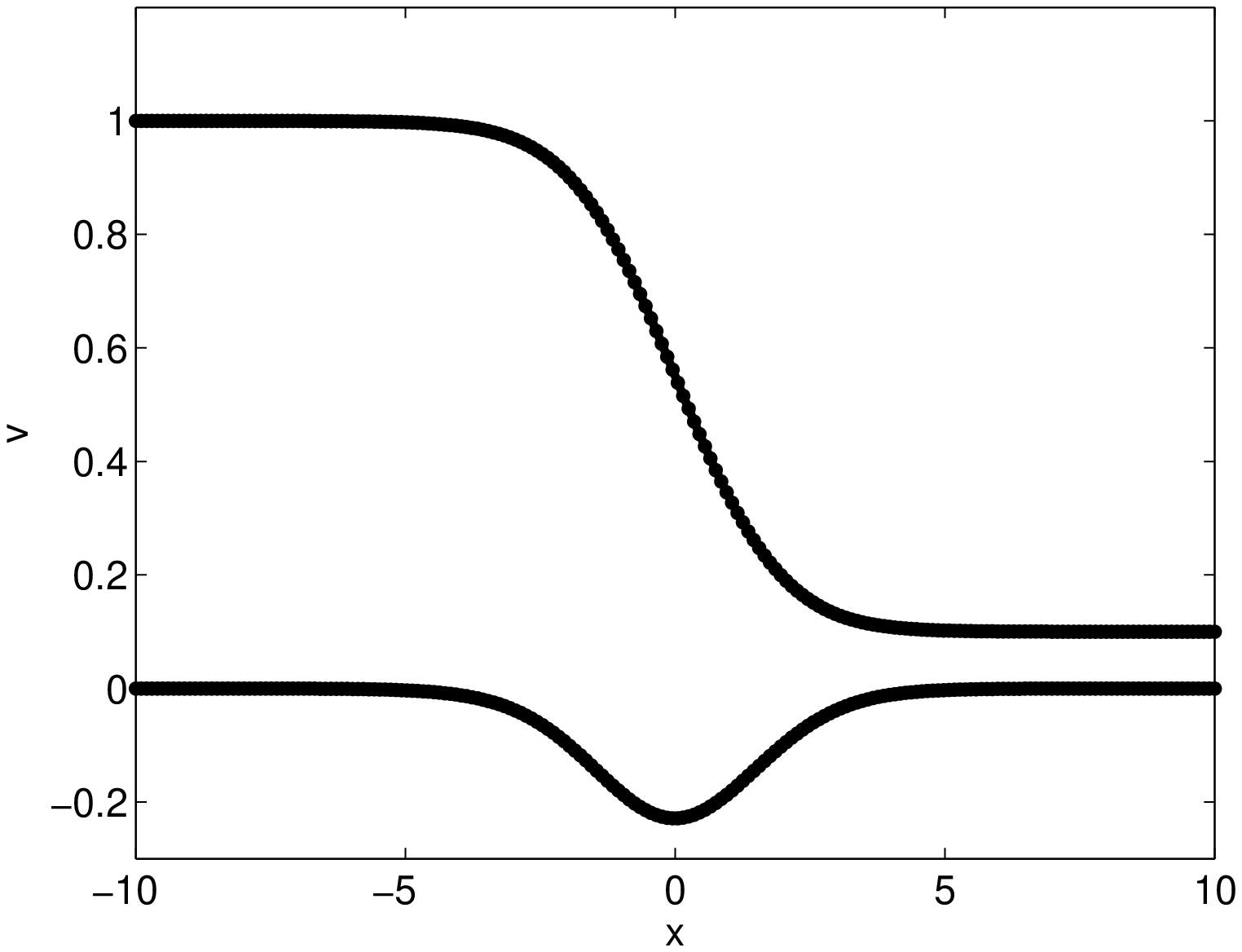} & \includegraphics[width=6cm]{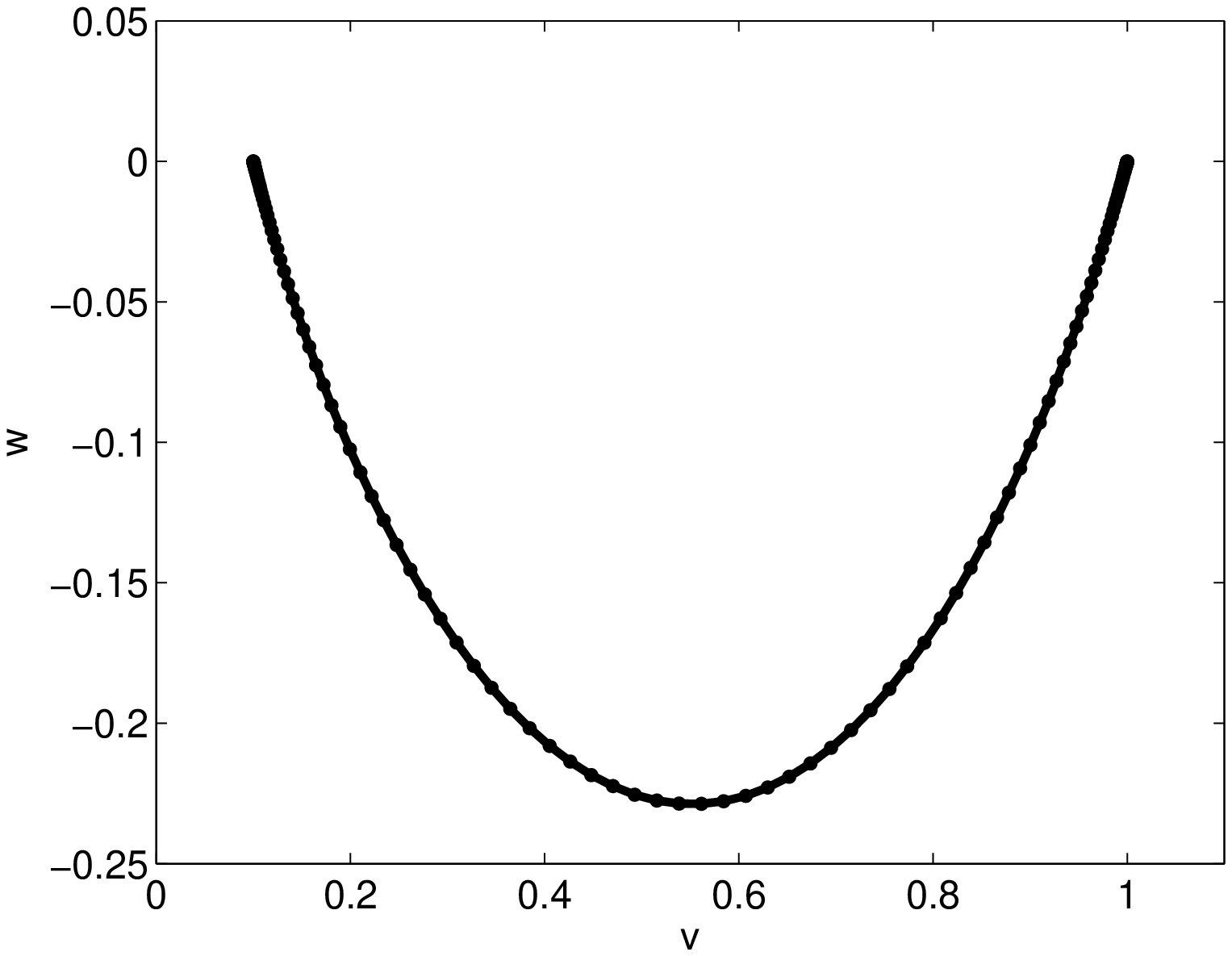} \\
\multicolumn{2}{c}{\mbox{\bf (a)}}\\
\includegraphics[width=6cm]{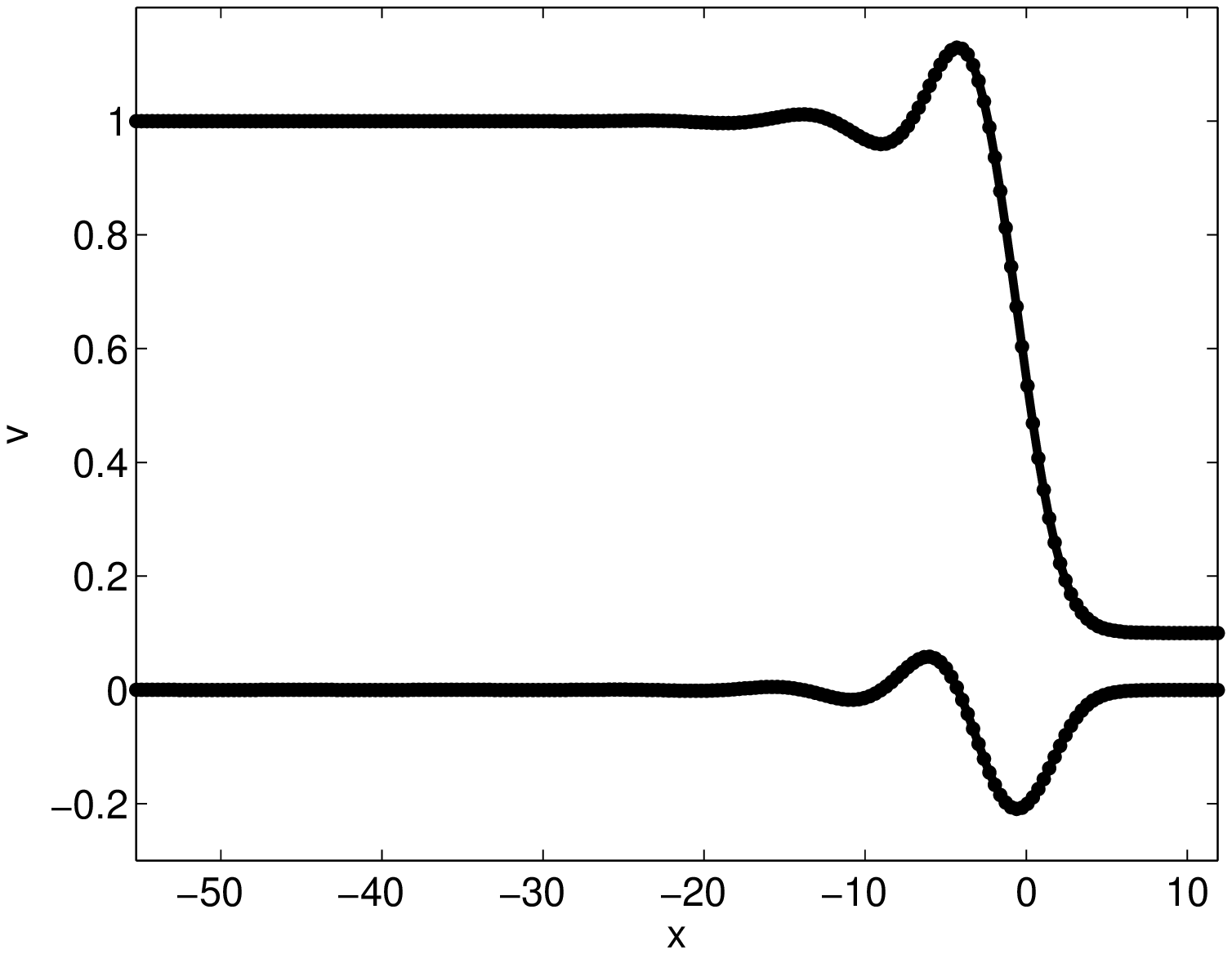} & \includegraphics[width=6cm]{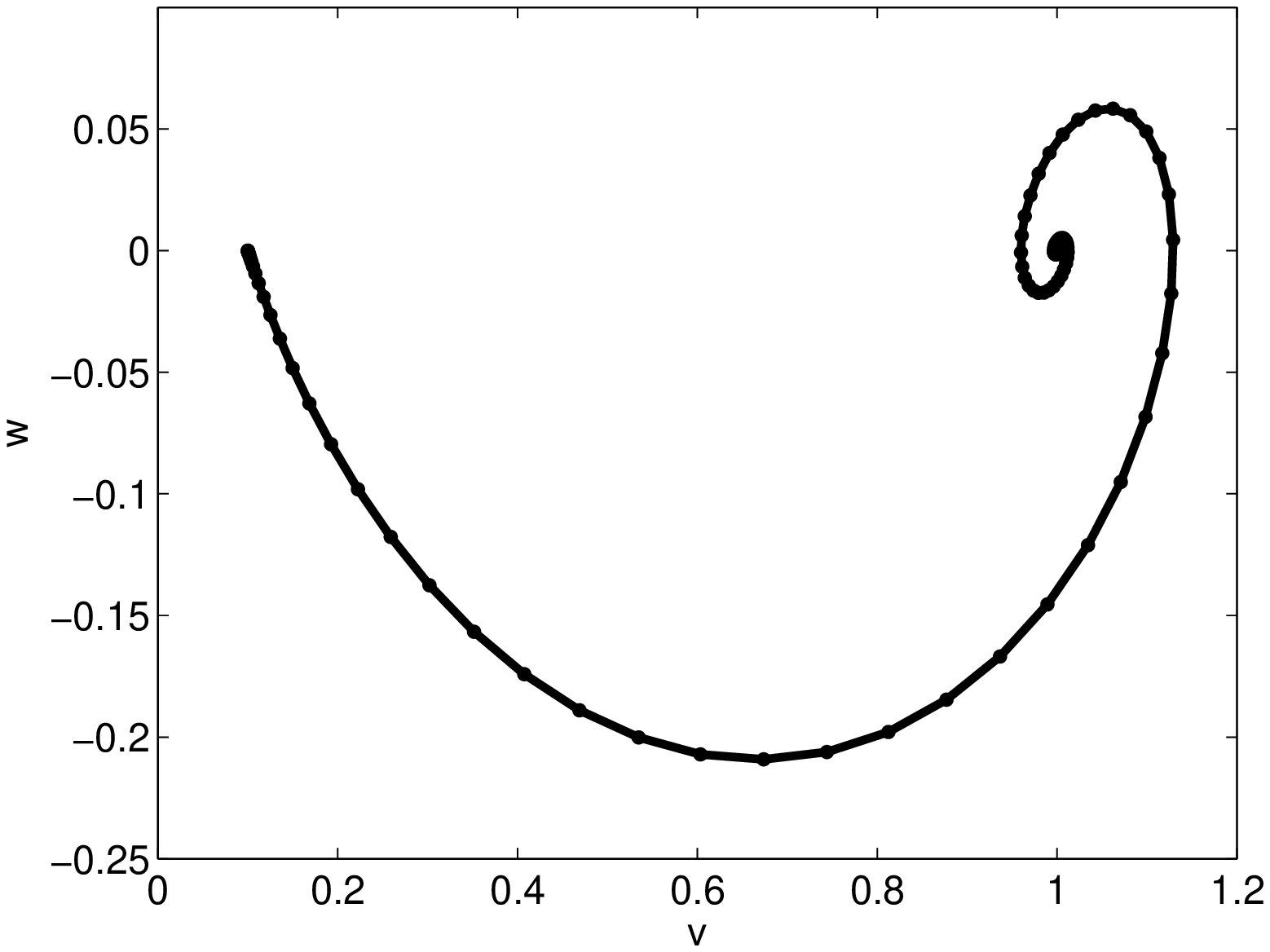} \\
\multicolumn{2}{c}{\mbox{\bf (b)}}\\
\includegraphics[width=6cm]{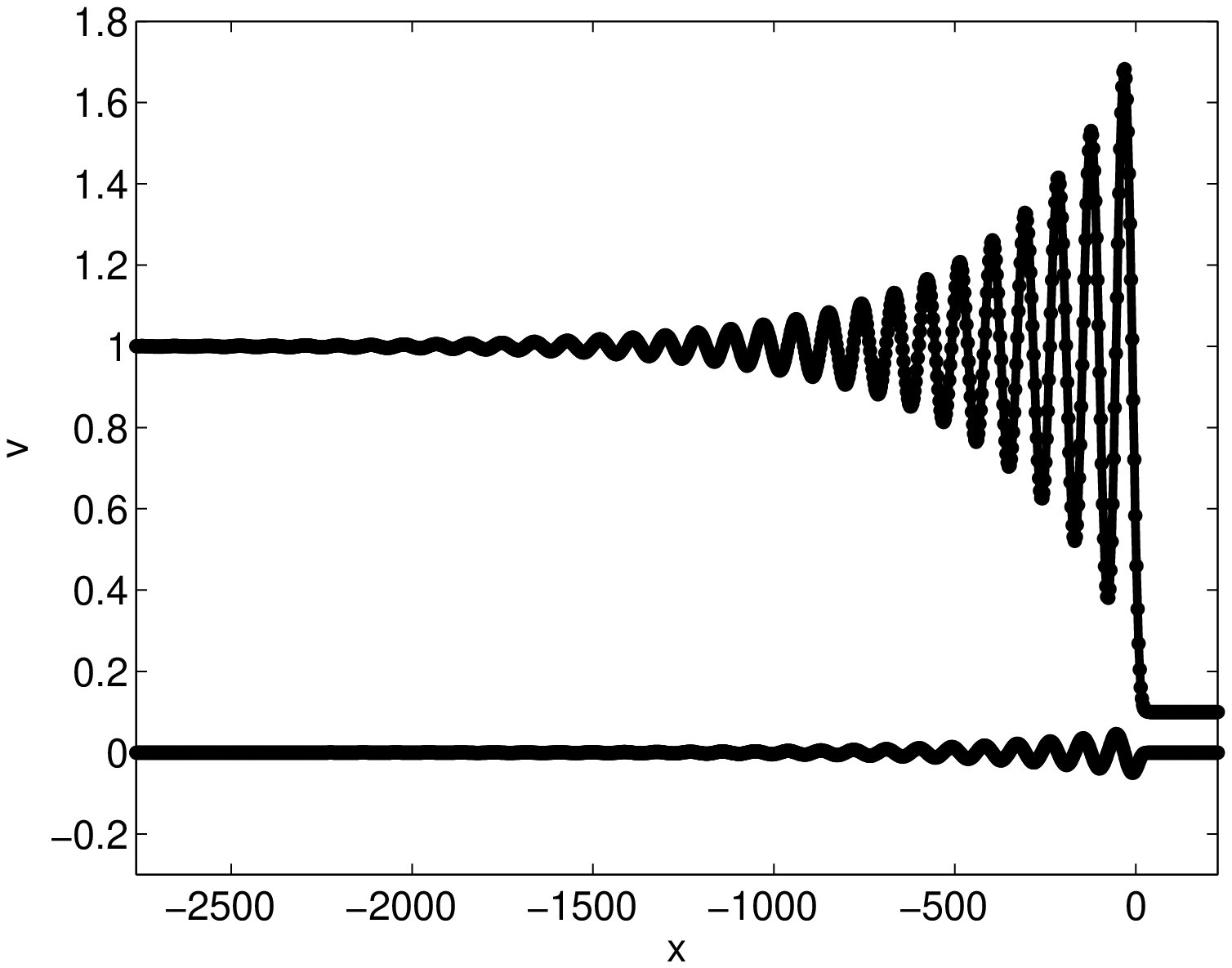} & \includegraphics[width=6cm]{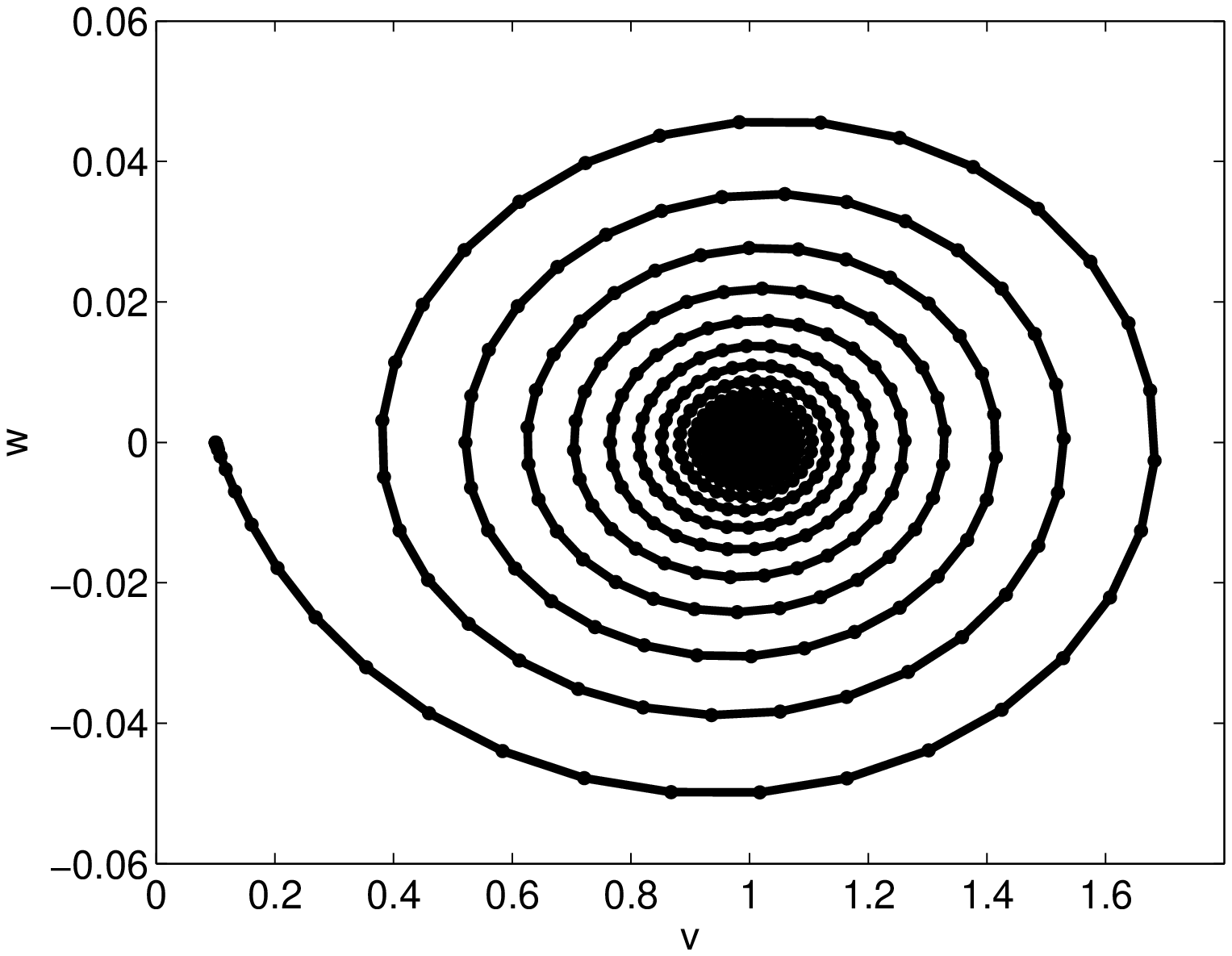} \\
\multicolumn{2}{c}{\mbox{\bf (c)}}
\end{array}$
\end{center}
\caption{Images of the profiles and their derivatives (left) and corresponding phase portraits (right)  for an adiabatic monatomic gas ($\gamma=5/3$) with $v_+=0.1$ and $d$ varying (note $d_*\approx 0.259$).  We demonstrate $(a)$ monotone profiles with $d=0.2$, $(b)$ non-monotone profiles which are mildly oscillatory with $d=2$, and $(c)$ non-monotone profiles that are highly oscillatory with $d=200$.
}
\label{profiles}
\end{figure}

\subsection{Classification of Profiles}

We show that smooth shock profiles are monotone for small values of $d$ and transition to highly oscillatory fronts when $d$ gets large; see Figure \ref{profiles} for illustrative examples.  The transition point between monotone and non-monotone profiles is found to be
\begin{equation}
\label{dstar}
d_* = \frac{1}{4v_-^2(1+a p'(v_-))},
\end{equation}
and in the case of an adiabatic gas with $v_-=1$, see Section \ref{adiabatic}, this becomes
\begin{equation}
\label{dstar2}
d_* = \frac{1}{4(1-a\gamma)} =  \frac{M^2}{4(M^2-1)} ,
\end{equation}
where $M$ is the Mach number.  In particular as the amplitude approaches zero, we have that $1+a p'(v_-) \rightarrow 0$, see \eqref{jxexp1}, thus making all profiles monotone regardless of $d$; this is consistent with the results in Section \ref{smallamp}.  In the large-amplitude limit, we have that $a\rightarrow 0$ and thus $d_*\rightarrow 1/(4 v_-^2)$.  Hence, for values of $d$ less than $1/(4 v_-^2)$, all profiles are monotone, regardless of amplitude, and for $d\geq 1/(4 v_-^2)$ a transition from monotone to non-monotone occurs for moderate to large amplitude fronts.  We have the following:

\begin{theorem}
Shock profiles of \eqref{slemrod} are monotone iff $0\leq d\leq d_*$.
\end{theorem}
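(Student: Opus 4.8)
The plan is to recast the profile equation \eqref{profile} as the planar system \eqref{firstorder} and read off monotonicity from the type of the rest point at $v_-$. First I record the sign information I will use. From \eqref{phi}, $\phi(v)/v=v-v_-+a\bigl(p(v)-p(v_-)\bigr)$ has derivative $1+ap'(v)$, which is increasing in $v$ because $a>0$ and $p''>0$; by the Rankine--Hugoniot relation \eqref{RH} this expression vanishes exactly at $v=v_\pm$, so $\phi<0$ on $(v_+,v_-)$ and $\phi>0$ on $(0,v_+)\cup(v_-,\infty)$, while $\phi'(v_\pm)=v_\pm\bigl(1+ap'(v_\pm)\bigr)$ with $1+ap'(v_+)<0<1+ap'(v_-)$ (the chord slope $-1/a$ lies strictly between $p'(v_+)$ and $p'(v_-)$ by convexity of $p$). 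Linearizing \eqref{firstorder} at $(v_\pm,0)$ gives a Jacobian with determinant $(1+ap'(v_\pm))/d$ and trace $1/(dv_\pm)>0$; hence $(v_+,0)$ is a saddle for every $d>0$, while $(v_-,0)$ has positive trace and positive determinant and so is an unstable node or an unstable spiral according to the sign of its discriminant $\tfrac1d\bigl(\tfrac1{dv_-^2}-4(1+ap'(v_-))\bigr)$, which is nonnegative precisely when $d\le d_*$, with $d_*$ as in \eqref{dstar}.

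For the ``only if'' direction I prove the contrapositive: if $d>d_*$ then $(v_-,0)$ is a hyperbolic unstable spiral, so by the Hartman--Grobman theorem (or by integrating the linearized flow directly) every solution tending to $(v_-,0)$ as $x\to-\infty$ winds around it infinitely often; in particular the profile does, so $v-v_-$ and $v'$ change sign infinitely often near $x=-\infty$, and the profile is not monotone.

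For the ``if'' direction take $0\le d\le d_*$; the case $d=0$ is first order and hence monotone, so assume $0<d\le d_*$, where $(v_-,0)$ is an unstable node. The profile orbit emanates from the node into $\{v<v_-\}$ (it descends from $v_-$ to $v_+$), so $v'=w<0$ near $x=-\infty$; and it reaches the saddle $(v_+,0)$ along its stable eigenvector, whose slope is negative, so coming in from $v>v_+$ we again have $w<0$ near $x=+\infty$. The decisive local fact is that along \eqref{firstorder} one has $w'=-\phi(v)/(dv)>0$ at every point with $w=0$ and $v\in(v_+,v_-)$. Granting that the orbit stays in the strip $\{v_+\le v\le v_-\}$, $w<0$ for all $x$: indeed $\sup_x w\ge 0$ since $w\to0$ at $x=\pm\infty$, but were the supremum positive it would be attained at an interior $x^*$ with $w'(x^*)=0$, whence $w(x^*)=\phi(v(x^*))\le 0$, absurd; so $\sup_x w=0$, and were this attained at some finite $x_0$ then $w'(x_0)=0$ would force $\phi(v(x_0))=0$, i.e.\ $(v(x_0),w(x_0))$ a rest point $(v_\pm,0)$ reached in finite $x$, impossible for the non-constant heteroclinic. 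Hence $w<0$ throughout and the profile is monotone.

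What remains, and the step I expect to be the main obstacle, is to confine the profile orbit to $\{v_+\le v\le v_-\}$ when $d\le d_*$. An escape below $v_+$ is excluded directly: there $\phi(v)>0$, so in $\{v<v_+,\,w<0\}$ both $v'$ and $w'$ are negative and the orbit runs off toward the singular line $v=0$ instead of returning to $(v_+,0)$. Excluding an excursion into $\{v>v_-\}$ is the delicate part, and it is precisely here that $d\le d_*$ is used: because $(v_-,0)$ is a node, the profile leaves it---and returns to it as $x\to-\infty$---monotonically in $v$, staying on the side $\{v<v_-\}$, and one can then rule out a returning loop carrying $v$ above $v_-$ by a Jordan-curve / Poincar\'e--Bendixson argument, using that the orbit stays in a fixed sublevel set of the Lyapunov function $E(v,w)=\tfrac12 w^2-\tfrac1d\int_v^{v_-}\phi(\tilde v)/\tilde v\,d\tilde v$ from the existence proof, which is nondecreasing along the flow and rises from $0$ at $x=-\infty$ to $-\tfrac1d\int_{v_+}^{v_-}\phi(\tilde v)/\tilde v\,d\tilde v>0$ at $x=+\infty$. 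For $d>d_*$ this confinement fails exactly because the spiral at $(v_-,0)$ forces infinitely many crossings of $v=v_-$, which is the non-monotone regime identified above.
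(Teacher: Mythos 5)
Your ``only if'' direction is sound and coincides with the paper's: for $d>d_*$ the rest point $(v_-,0)$ of \eqref{firstorder} is an unstable spiral, so the backward-in-$x$ approach of the profile to $(v_-,0)$ oscillates and monotonicity fails. Your sign bookkeeping ($\phi<0$ on $(v_+,v_-)$, $1+ap'(v_+)<0<1+ap'(v_-)$, saddle at $(v_+,0)$, node/spiral dichotomy at $(v_-,0)$ exactly at $d_*$ from \eqref{dstar}) is also correct, and the maximum-principle step ``if the orbit stays in the strip $v_+\le v\le v_-$ then $w<0$ throughout'' is a nice, complete argument, as is the exclusion of an escape below $v_+$.

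The gap is precisely the step you flag as delicate, and it is not closed by what you wrote. Note first that your in-strip argument never uses $d\le d_*$; all of the content of the ``if'' direction is therefore concentrated in the confinement claim that the heteroclinic does not make an excursion into $\{v>v_-\}$. You assert that ``because $(v_-,0)$ is a node, the profile leaves it \dots\ staying on the side $\{v<v_-\}$,'' but a node is approached (backward in $x$) by orbits from both sides of $v=v_-$, tangent to eigendirections of positive slope; nothing in the nodal structure by itself selects the lower-left sector for the particular orbit that is the stable manifold of the saddle. The Lyapunov function does not rescue this: along the profile $E$ increases to $E(v_+,0)>0$, and the sublevel set $\{E<E(v_+,0)\}$ contains a full neighborhood of points with $v>v_-$, so ``staying in a fixed sublevel set'' does not confine the orbit to $v\le v_-$. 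Likewise the invocation of Poincar\'e--Bendixson is not on point, since the backward limit set is the node in either scenario; what must be excluded is a finite loop through $\{v>v_-\}$ whose backward tail re-enters the node from the upper-right sector, and that requires an additional construction in which $d\le d_*$ enters quantitatively --- for instance an explicitly invariant region bounded by an eigendirection $w=\mu(v-v_-)$ of \eqref{linearode}, or the paper's route: profiles are monotone for small $d$ by the singular-perturbation argument, the connection depends continuously on $d$, and monotonicity could first be lost at some $d_0<d_*$ only if the phase curve met $v=v_-$ vertically, which the linearized field \eqref{linearode} forbids since it maps $(0,-w)^T$ to $(-w,-w/(d_0v_-))^T$. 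As it stands, your ``if'' direction proves only the conditional statement ``if the orbit stays in the strip, it is monotone,'' not the theorem.
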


\begin{proof}
By a geometric singular perturbation argument very similar to the one in Section \ref{smallamp}, we know that profiles are monotone for sufficiently small values of $d$.  When $d=d_*$, we can show that the local behavior near the fixed point $(v_-,0)$ transitions from that of an unstable node to an unstable spiral, which is clearly non-monotone.  Hence, it suffices to show that the profile does not lose monotonicity until $d$ passes through $d_*$.  By linearizing \eqref{firstorder} about $v_-$, we get the system
\begin{equation}
\label{linearode}
\begin{pmatrix}v\\w\end{pmatrix}' = \begin{pmatrix}0 & 1\\\frac{-(1+a p'(v_-))}{d} & \frac{1}{d v_-}\end{pmatrix} \begin{pmatrix}v\\w\end{pmatrix}.
\end{equation}
If monotonicity is lost before $d$ gets to $d_*$, then for some $d_0<d_*$ the phase curve connects to $v_-$ vertically.  This would require the vector field near $v_-$ to admit a vector in the $w$ direction.  However, since
\[
\begin{pmatrix}0 & 1\\\frac{-(1+a p'(v_-))}{d_0} & \frac{1}{v_-}\end{pmatrix} \begin{pmatrix}0\\-w\end{pmatrix} = \begin{pmatrix}-w\\-\frac{w}{d_0 v_-}\end{pmatrix},
\]
this cannot happen.  Hence the profiles are monotone whenever $d<d_*$.
\end{proof}

\begin{figure}[ht]
\begin{center}
\includegraphics[width=12cm]{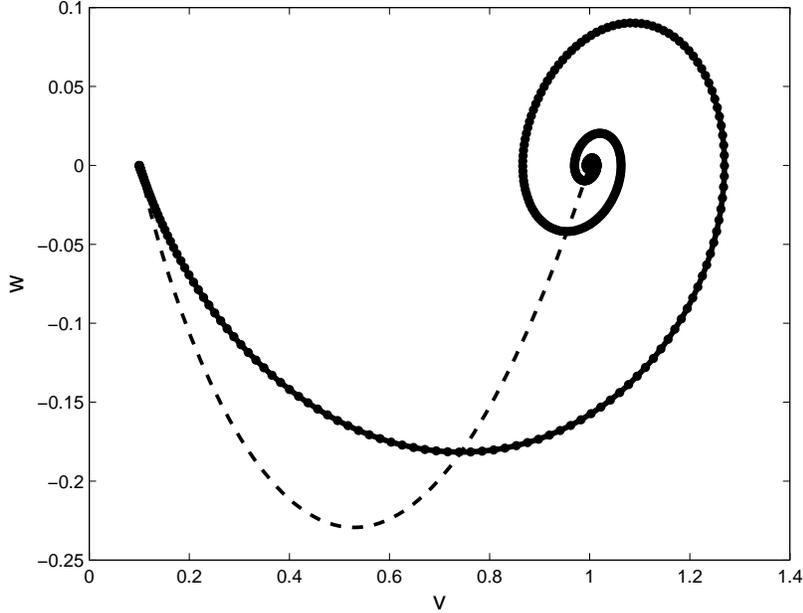}
\end{center}
\caption{Two phase portraits for an adiabatic monatomic gas ($\gamma=5/3$) for $v_+=0.1$.  The dark line corresponds to a non-monotone profile with $d=5$ and the dotted line is the zero-capilarity profile $w=\phi(v)$.  Note that the dotted curve intersects the dark one at its minimum.  Hence to bound the derivative of the non-monotone profile, we need only bound the derivative of the monotone profile.}
\label{vx}
\end{figure}

\subsection{Bounds on $|\bV_x|$}

W now provide bounds on $\bV_x$ that are used later in our analysis.  We show that $|\bV_x| \leq \varepsilon^2/4$, where $\varepsilon=v_--v_+$ is the amplitude of the profile.  This bound holds regardless of capillarity strength, and is important for our analysis in Section \ref{hfb}.  The idea behind the proof follows from Figure \ref{vx}, where we see that the maximum value of $|\bV_x|$ occurs at the point where the profile intersects the zero-capillarity profile.  Thus we need only find a bound on the zero-capillarity profile.

\begin{theorem}
\label{asympthm}
Shock profiles of \eqref{slemrod} satisfy $|\bV_x| \leq \varepsilon^2/4$, where $\varepsilon=|v_--v_+|$.  
\end{theorem}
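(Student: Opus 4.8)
The plan is to reduce the bound for an arbitrary profile to a bound on the zero-capillarity profile — this is the geometric content of Figure~\ref{vx} — and then estimate the latter by elementary calculus.

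First, I note that along any solution of the first-order system \eqref{firstorder} one has
\[
  \frac{d}{dx}\!\left(\tfrac12 w^2\right) = w\,w' = \frac{w\bigl(w-\phi(v)\bigr)}{d\,v},
\]
so that at an interior critical point of $w^2$ either $w=0$ or $w=\phi(v)$. Since $w=\bV_x\to 0$ as $x\to\pm\infty$ and $w\not\equiv 0$, the function $|w|$ attains its maximum at some finite $x_0$, where $w(x_0)\neq 0$ and therefore $w(x_0)=\phi\bigl(v(x_0)\bigr)$; when $d=0$ the profile satisfies $w=\phi(v)$ identically and the same conclusion is immediate. Hence
\[
  \|\bV_x\|_{\infty}=|w(x_0)|=\bigl|\phi\bigl(v(x_0)\bigr)\bigr|\le\sup_{v\in[v_+,v_-]}\bigl|\phi(v)\bigr|,
\]
which says precisely that the extreme slope of the profile sits on the curve $w=\phi(v)$. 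It therefore suffices to bound $\phi$ on $[v_+,v_-]$.

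Second, write $\phi(v)=v\,g(v)$ with $g(v)=(v-v_-)+a\bigl(p(v)-p(v_-)\bigr)$. Then $g(v_-)=0$ trivially, $g(v_+)=0$ by the Rankine--Hugoniot relation \eqref{RH}, and $g''=a\,p''>0$, so $g$ is strictly convex; consequently $g<0$ on $(v_+,v_-)$ and $|\phi(v)|=-\phi(v)=v\,(-g(v))$ there. The aim is to show $-\phi(v)\le\varepsilon^2/4$ on $[v_+,v_-]$, and the natural route is to establish the pointwise domination $-\phi(v)\le (v-v_+)(v_--v)$ and then invoke the elementary inequality $(v-v_+)(v_--v)\le\tfrac14(v_--v_+)^2$. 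Using the identity $\phi(v)+(v-v_+)(v_--v)=v_+(v-v_-)+a\,v\bigl(p(v)-p(v_-)\bigr)$, this domination is equivalent to $a\,v\,m(v)\ge v_+$ on $[v_+,v_-]$, where $m(v):=\bigl(p(v)-p(v_-)\bigr)/(v_--v)>0$ is the (negated) secant slope of $p$ through $v_-$; since $a\,m(v_+)=1$ by \eqref{RH}, equality holds at $v=v_+$, and the claim reduces to showing that $v\mapsto a\,v\,m(v)$ does not dip back below $v_+$ as $v$ increases to $v_-$.

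I expect this last step to be the real obstacle. Convexity of $p$ alone does not seem to fix the sign of $\tfrac{d}{dv}\bigl(v\,m(v)\bigr)$, since it pits the growing linear factor $v$ against the decaying secant slope $m(v)$; moreover the parabola bound is tight in the small-amplitude limit only when $p$ approaches the isothermal law, so the constant $1/4$ is genuinely sensitive to the pressure. I would therefore close the estimate using the explicit adiabatic law $p(v)=v^{-\gamma}$ together with the normalization $v_-=1$ of Section~\ref{adiabatic}, where $a$ and $m(v)$ are fully explicit and $a\,v\,m(v)-v_+$ can be factored through $(v-v_+)$; alternatively one differentiates $h(v)=v\,m(v)$ and tracks the sign of the resulting numerator on $[v_+,v_-]$ directly. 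Once the domination by the parabola is in hand the theorem follows, and the bound descends to the original variables of \eqref{slemrod} through the scalings of Sections~\ref{scalingsection} and \ref{adiabatic}.
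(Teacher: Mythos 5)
Your first half coincides with the paper's own argument and is fine: the paper likewise observes that the maximum of $|\bV_x|$ occurs where $w'=0$ in \eqref{firstorder2}, i.e.\ where the orbit crosses the zero-capillarity curve $w=\phi(v)$, so that $\sup_x|\bV_x|\le\sup_{[v_+,v_-]}|\phi|$; your derivation via $\tfrac{d}{dx}(w^2/2)$ makes this rigorous. The genuine gap is in your second half, and the route you sketch cannot be completed. You reduce the theorem to the pointwise domination $-\phi(v)\le(v-v_+)(v_--v)$, equivalently $a\,v\,m(v)\ge v_+$ on $[v_+,v_-]$, and you leave exactly this unproven, hoping to verify it from the explicit adiabatic law. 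In fact it fails there: take $\gamma=5/3$, $v_-=1$, $v_+=0.1$, so that \eqref{RH} gives $a=(v_--v_+)/(p(v_+)-p(v_-))\approx 0.0198$; at $v=0.5$ one has $m(0.5)=(2^{5/3}-1)/0.5\approx 4.35$, hence $a\,v\,m(v)\approx 0.043<0.1=v_+$, i.e.\ $-\phi(0.5)\approx 0.228>(0.5-0.1)(1-0.5)=0.2$. So the parabola through $(v_+,0)$ and $(v_-,0)$ does not dominate $-\phi$ for moderate or strong shocks, and no amount of explicit $\gamma$-law computation will rescue that step.

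For comparison, the paper closes the estimate by a cruder comparison: it drops the positive pressure term to get $|\phi(v)|<v(v_--v)$ and then asserts $\sup_{[v_+,v_-]}v(v_--v)\le\varepsilon^2/4$. Be aware that this final assertion is itself not valid in general: as soon as $v_+\le v_-/2$ the supremum equals $v_-^2/4>\varepsilon^2/4$, and in the numerical example above one even has $\sup_{[v_+,v_-]}|\phi|\approx 0.228>\varepsilon^2/4\approx 0.2025$, while in the weak-shock limit the sharp constant for an adiabatic gas is $(\gamma+1)/8$, which exceeds $1/4$ once $\gamma>1$ (consistent with your own remark that the constant is sensitive to the pressure law). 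So your instinct that this is ``the real obstacle'' is well founded: neither your parabola nor the paper's one-line bound establishes the stated constant for an ideal gas. What does follow robustly from the first (common) step is the $d$- and amplitude-uniform bound $|\bV_x|\le\sup_{[v_+,v_-]}v(v_--v)\le v_-^2/4$, together with $|\bV_x|=\mathcal{O}(\varepsilon^2)$ for small amplitudes (with a $\gamma$-dependent constant, obtainable e.g.\ from the convexity estimate $|v-v_-+a(p(v)-p(v_-))|\le\tfrac{a}{2}\bigl(\sup_{[v_+,v_-]}p''\bigr)(v-v_+)(v_--v)$); if the precise constant $\varepsilon^2/4$ is needed downstream, a different argument is required.
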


\begin{proof}
Consider the phase portrait of the profile.  Let $v_0$ denote the point that maximizes $|\bV_x|$.  This occurs when $w'=0$ in \eqref{firstorder2}, or in other words, when $w = \phi(v)$, which is the zero-capillarity profile.  Hence, the maximum point for $|\bV_x|$ coincides with the zero-capillarity curve, which we can show is bounded above by $\varepsilon^2/4$.  This follows easily since
\[
\sup_{x\in\mathbb{R}} |\bV_x| = \sup_{v\in[v_+,v_-]} |\phi(v)| < \sup_{v\in[v_+,v_-]} |\bV(\bV-v_-)| \leq \frac{|v_--v_+|^2}{4} = \frac{\varepsilon^2}{4}.
\]
\end{proof}

\begin{remark}
In the $\varepsilon\rightarrow 0$ limit we can likewise show that $|\bV_{xx}|=|\bV_x|\mathcal{O}(\varepsilon)$.
\end{remark}

\subsection{Stability problem}

By linearizing \eqref{trav:cap2} about the profile $(\hat{v},\hat{u})$, we get the eigenvalue problem
\begin{equation}
\label{jx6}
\begin{split}
&\lambda v + v' - u' =0,\\
&\lambda u + u' - (f(\bV) v)' = \left(\frac{u'}{\bV}\right)' - d v''',
\end{split}
\end{equation}
where $f(\bV) = -a p'(\hat{v})-\bV_x/\bV^2$.  We say that a shock profile of \eqref{slemrod} is spectrally stable if the linearized system \eqref{jx6}  has no spectra in the closed deleted right half-plane given by $P = \{\R(\lambda) \geq 0\}\setminus\{0\}$, that is, there are no growth or oscillatory modes.  To show that the essential spectrum is stable, we linearize \eqref{trav:cap2} about the endstates $(v_\pm,u_\pm)$ and show that the resulting constant-coefficient system is stable; see \cite{He}.  This was done for general viscous-dispersive and higher-order systems in \cite{Hu.2}.  Thus it suffices to show that the point spectrum is also stable.  However, since traveling wave profiles always have a zero-eigenvalue due to translational invariance, it is often difficult to get good uniform bounds in energy estimates.  Hence, we use the standard technique of transforming into integrated coordinates; see \cite{Go.1, ZH, BHRZ}.  This goes as follows:

Suppose that $(v,u)$ is an eigenfunction of \eqref{jx6} with eigenvalue $\lambda\ne 0$.  Then
\[
\tilde{u}(x) = \int_{-\infty}^x u(z) dz , \quad
\tilde{v}(x) = \int_{-\infty}^x v(z) dz,
\]
and their derivatives decay exponentially as $x \rightarrow \infty$; see \cite{ZH}. Thus, by substituting and then integrating, $(\tilde{u},\tilde{v})$ satisfies (suppressing the tilde)
\begin{subequations}\label{ep}
\begin{align}
&\lambda v + v' - u' =0, \label{ep:1}\\
&\lambda u + u' - f(\bV) v' = \frac{u''}{\bV} - d v'''\label{ep:2}
\end{align}
\end{subequations}
This new eigenvalue problem is important because its point spectrum differs from that of \eqref{jx6} only at $\lambda=0$.  It follows that spectral stability of \eqref{jx6} is implied by spectral stability of \eqref{ep}.  Hence, we will use \eqref{ep} instead of \eqref{jx6} in the remainder of our stability analysis.

\section{Small-Amplitude Spectral Stability}
\label{spec}

In this section we show that small-amplitude smooth shock profiles are spectrally stable.  This work generalizes the energy methods in \cite{MN,BHRZ} to the case of an isentropic gas with capillarity.

\begin{theorem}
Small-amplitude shocks of \eqref{slemrod} are spectrally stable.
\end{theorem}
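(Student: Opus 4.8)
The plan is to establish spectral stability for small-amplitude monotone profiles via a spectral energy estimate in the integrated coordinates \eqref{ep}, following the template of Matsumura \& Nishihara \cite{MN} and Barker--Humpherys--Rudd--Zumbrun \cite{BHRZ}, but adapted to carry the extra dispersive term $-d v'''$. Since we already know from Section~\ref{smallamp} that small-amplitude profiles are monotone, and since the essential spectrum is stable by \cite{Hu.2}, it suffices to rule out point spectrum in $P = \{\R\lambda \geq 0\}\setminus\{0\}$. First I would write $\lambda = \xi + i\tau$ with $\xi \geq 0$, multiply \eqref{ep:1} by a suitable multiple of $\bar v$ and \eqref{ep:2} by $\bar u$ (possibly with a weight depending on $\bV$), integrate over $\mathbb{R}$, and take real parts. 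The conservation-law structure $v' = u' - \lambda v$ lets one substitute for $v'$ and trade derivatives; the key is to choose the relative weighting of the two equations — as in \cite{MN} this is essentially a single free constant — so that the quadratic form in $(v,u,v',u')$ picked up from the good terms $\xi\|v\|^2$, $\xi\|u\|^2$, $\|u'\|^2/\bV$ dominates the indefinite commutator terms, which carry factors of $\bV_x$ and hence (by the Remark after the small-amplitude lemma, or Theorem~\ref{asympthm}) are $\mathcal{O}(\varepsilon^2)$, while the coefficient $1 + a p'(v_-)$ is also $\mathcal{O}(\varepsilon)$ by \eqref{jxexp1}.

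The new feature relative to \cite{BHRZ} is the capillarity term. Multiplying $-d v'''$ by $\bar u$ and integrating by parts produces $d\int v'' \bar u'$, and then using \eqref{ep:1} to write $u' = v' + \lambda v$ gives $d\int v''\overline{(v'+\lambda v)} = \tfrac{d}{2}\int (|v'|^2)' - d\bar\lambda \int v''\bar v$ — wait, more carefully, $d\int v''\bar v' = \tfrac{d}{2}\int (|v'|^2)' = 0$ and $d\bar\lambda\int v''\bar v = -d\bar\lambda\|v'\|^2$, so taking real parts the dispersive contribution is $-d\,\xi\|v'\|^2$, which is \emph{favorably signed} (it has the same sign as the other good terms when $\xi \geq 0$). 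Thus the dispersion actually helps rather than hurts in this estimate, at least at the level of the top-order term; any lower-order cross terms it generates are again controlled by smallness of $\varepsilon$ and of $\bV_x$. I would then assemble everything into an inequality of the schematic form
\begin{equation*}
\xi\left(\|v\|^2 + \|u\|^2 + d\|v'\|^2\right) + c\|u'\|^2 \leq C\varepsilon\left(\|v\|^2 + \|u\|^2 + \|v'\|^2 + \|u'\|^2\right),
\end{equation*}
valid for some fixed weights with $c>0$, and conclude that for $\varepsilon$ sufficiently small the right side is absorbed, forcing $v \equiv u \equiv 0$ unless $\xi = 0$; a separate (and easier) argument, or an $\xi \to 0^+$ limiting argument combined with the boundary-type terms, handles the purely imaginary axis $\xi = 0$, $\tau \neq 0$.

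The main obstacle I anticipate is the semi-parabolic (real, partially dispersive) structure: there is no diffusion acting directly on the $v$-component, so the energy estimate gives no $\|v'\|^2$ control from a dissipation term — it must come either from the sign-definite dispersive term $d\,\xi\|v'\|^2$ (which degenerates as $\xi \to 0$ and vanishes entirely when $d=0$) or be bypassed by using the first equation to express $v'$ in terms of $u'$ and $\lambda v$. Reconciling the $d=0$ case (where one must recover exactly the \cite{BHRZ} estimate) with the $d>0$ case in a single uniform argument is the delicate bookkeeping step, and choosing the multiplier weights so that the indefinite terms involving $f(\bV)v' = (-a p'(\bV) - \bV_x/\bV^2)v'$ are all genuinely higher order in $\varepsilon$ is where the real work lies.
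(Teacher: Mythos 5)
Your overall strategy---a weighted spectral energy estimate on the integrated system \eqref{ep}, with the capillarity term treated as a controllable perturbation---is the same as the paper's, and your observation that $-d\ip v'''\bar u$ reduces, after integration by parts and the substitution $u'=\lambda v+v'$ from \eqref{ep:1}, to a favorably signed $-d\,\R(\lambda)\ip|v'|^2$ plus a real-part-zero term is correct. (The paper instead keeps the capillarity terms on the right-hand side and extracts the nonnegative quantity $-\tfrac{d}{2}\ip\bigl(1/f(\bV)\bigr)'|v'|^2$ by a further integration by parts; the two bookkeepings are comparable, though yours generates extra weight-commutator terms once the multiplier is $\bar u/f(\bV)$ rather than $\bar u$.)

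There is, however, a genuine gap at the heart of the argument. Your target inequality $\R(\lambda)\bigl(\|v\|^2+\|u\|^2+d\|v'\|^2\bigr)+c\|u'\|^2\le C\varepsilon\bigl(\|v\|^2+\|u\|^2+\|v'\|^2+\|u'\|^2\bigr)$ cannot be closed whenever $\R(\lambda)\lesssim C\varepsilon$---an open neighborhood of the imaginary axis, not just the axis itself---because nothing on the left then controls $\|u\|^2$ or $\|v\|^2$. The whole point of the Matsumura--Nishihara/Goodman-type weight $1/f(\bV)$ is that one of the ``indefinite commutator terms carrying factors of $\bV_x$,'' which you propose to absorb as $\mathcal{O}(\varepsilon^2)$ errors, is in fact sign-definite for monotone profiles and must be retained as a principal good term: in the paper it appears as $\ip g(\bV)|u|^2$ with $g$ as in \eqref{g}, nonnegative because $\bV_x<0$ forces $f(\bV)>0$ and $\bigl(1/f(\bV)\bigr)'<0$, so that the leading contribution $-\tfrac12\bigl(1/f(\bV)\bigr)'$ is positive. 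That term, together with the fact that the paper's manipulation (substituting $\bar u'=\overline{\lambda v+v'}$ into $\ip v\bar u'$) produces the $v$-contribution only in the form $\R(\lambda)\ip|v|^2$ and hence leaves no unweighted $\|v\|^2$ error on the right, is what supplies coercivity on and near $\R(\lambda)=0$. Your suggestion that the purely imaginary case can be dispatched by ``a separate (and easier) argument'' or a limiting argument has the difficulty backwards: that is precisely the hard case, and without identifying the sign of the weight-commutator term the estimate does not close there.
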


\begin{proof}

Suppose that $\R\lambda\geq 0$.  Recall that small-amplitude profiles are monotone with $\bV_x<0$ and thus also satisfy $f(\bV) > 0$ and $f'(\bV)<0$.  By multiplying \eqref{ep:2} by the conjugate $\bar{u}/f(\bV)$ and integrating in $x$ from $-\infty$ to $\infty$, we have
\[
\ip \frac{\lambda u \bar{u}}{f(\bV)} + \ip \frac{u' \bar{u}}{f(\bV)} -  \ip v' \bar{u} = \ip
\frac{u''\bar{u}}{\bV f(\bV)} - \ip \frac{d v''' \bar{u}}{f(\bV)}.
\]
Integrating the last three terms by parts and appropriately using \eqref{ep:1} to substitute for $u'$ in the third term gives us
\begin{align*}
&\ip \frac{\lambda |u|^2}{f(\bV)} + \ip \left[ \frac{1}{f(\bV)} + \left(\frac{1}{\bV f(\bV)}\right)' \right] u' \bar{u} + \ip v (\overline{\lambda v + v'}) + \ip \frac{|u'|^2}{\bV f(\bV)}\\
&\quad= d \ip \frac{1}{f(\bV)} v'' \bar{u}' + d\ip \left(\frac{1}{f(\bV)}\right)' v'' \bar{u}.
\end{align*}
We take the real part and appropriately integrate by parts:
\begin{align*}
&\R(\lambda)\ip \left[ \frac{|u|^2}{f(\bV)}+|v|^2 \right] + \ip g(\bV) |u|^2 + \ip \frac{|u'|^2}{\bV f(\bV)} \\
&\qquad= d\:\R\left[\ip \frac{1}{f(\bV)} v'' \bar{u}'+ \ip \left(\frac{1}{f(\bV)}\right)'v'' \bar{u}\right],
\end{align*}
where
\begin{equation}
\label{g}
g(\bV)= - \frac{1}{2} \left[ \left(\frac{1}{f(\bV)}\right)' + \left(\frac{1}{\bV f(\bV)}\right)'' \right].
\end{equation}
Thus, by integrating the last two terms by parts and further simplifying, for $\lambda \geq 0$, we have
\begin{equation}
\begin{split}
&  \ip g(\bV) |u|^2  + \ip \frac{|u'|^2}{\bV f(\bV)} - \frac{d}{2}\ip \left(\frac{1}{f(\bV)}\right)' |v'|^2 \\
&\qquad \leq - d\:\R\left[ 2\ip \left(\frac{1}{f(\bV)}\right)'v' \bar{u}' + \ip \left( \frac{1}{f(\bV)}\right)'' v' \bar{u}\right].
\end{split}
\end{equation}
We note that since $d \geq 0$ and $\bV_x <0$, then all the terms on the left-hand side are non-negative.  Moreover, since $|\bV_x|=\CalO(\varepsilon^2)$ and $|\bV_{xx}| = |\bV_x| \CalO(\varepsilon)$, it follows that the right-hand side of the above equation is bounded above by
\begin{align*}
- 2 d \ip \left(\frac{1}{f(\bV)}\right)' |v'| |u'| + C d \ip \varepsilon |\bV_x| |v'||u|.
\end{align*}
Thus, by Young's inequality, we have
\begin{align*}
& \ip g(\bV) |u|^2  + \ip \frac{|u'|^2}{\bV f(\bV)} - \frac{d}{2}\ip \left(\frac{1}{f(\bV)}\right)' |v'|^2 \notag \\
&\qquad < - 2 d \ip \left(\frac{1}{f(\bV)}\right)' \left[ \frac{|v'|^2}{4 \eta_1} +  \eta_1 |u'|^2 \right] + C \ip \varepsilon |\bV_x| \left[ \frac{|v'|^2}{4 \eta_2} +  \eta_2 |u|^2 \right].
\end{align*}
We can see that for $\eta_1 >1$ and $\eta_2,\varepsilon$ sufficiently small, the left side dominates the right side, which is a contradiction.
\end{proof}


\section{Monotone large-amplitude shocks}
\label{noreal}

In this section, we show that monotone profiles have no unstable real spectrum.  Our proof follows from a novel energy estimate that generalizes that of \cite{BHRZ} to a general ideal gas law and the addition of a capillarity term.  This restricts the class of admissible bifurcations for monotone profiles to those of Hopf-type, where one or more conjugate pairs of eigenvalues cross the imaginary axis.

\begin{theorem}
Monotone shocks of \eqref{slemrod} have no unstable real spectrum.
\end{theorem}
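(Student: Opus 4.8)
The plan is to run a real-$\lambda$ energy estimate on the integrated system \eqref{ep} that exploits monotonicity ($\bV_x<0$) and the structural signs $f(\bV)>0$, $f'(\bV)<0$ available for monotone profiles. Suppose $\lambda\geq 0$ is a real eigenvalue with eigenfunction $(v,u)$ decaying at $\pm\infty$. The key preliminary move is to use \eqref{ep:1} to eliminate one variable: since $u'=\lambda v+v'$, every appearance of $u'$ (and after a further differentiation, $u''$) can be rewritten in terms of $v$, $v'$, $v''$ and $\lambda$. This turns \eqref{ep:2} into a single third-order equation for $v$ with a real parameter $\lambda\geq 0$, which is the object I would estimate.

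First I would choose a multiplier tailored to the capillarity term. The presence of $-dv'''$ suggests pairing \eqref{ep:2} against $\bar v$ (or a weighted version $\bar v/\bV$ or $\bar v/f(\bV)$) so that the dispersive term integrates by parts to produce $+d\int |v'|^2 \cdot(\text{weight})$ plus lower-order commutator terms involving $(\,1/\text{weight}\,)'$ and the profile derivatives $\bV_x,\bV_{xx}$. Simultaneously, the $u''/\bV$ term, after substituting $u''=\lambda v'+v''$, contributes a good $\int |v'|^2/\bV$-type term when integrated by parts against $\bar v'$, together with a $\lambda\int |v'|^2$ term that is nonnegative since $\lambda\ge 0$. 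The convective terms $u'$ and $f(\bV)v'$ become, after the same substitution, $\lambda v+v'$ and $f(\bV)v'$; pairing with $\bar v$ and taking the real part, the $v'\bar v$ pieces become exact derivatives $\tfrac12(|v|^2)'$ that integrate to zero or to a sign-definite term once the weight's derivative is brought in, while the $\lambda|v|^2$ and $\lambda f(\bV)|v|^2$ terms are again nonnegative. The aim is to arrange all the principal terms — the $\lambda$-terms, the $d\int|v'|^2$ term, and the $\int|v'|^2/\bV$ term — with a definite sign, so that the remaining terms, all proportional to profile derivatives (hence to $|\bV_x|\le\varepsilon^2/4$ by Theorem~\ref{asympthm} and, if needed, $|\bV_{xx}|$ bounds), can be absorbed by Young's inequality, forcing $v\equiv 0$ and hence $u\equiv 0$.

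The novelty the introduction advertises — an estimate with ``no time-asymptotic equivalent'' — signals that the multiplier will not be the obvious $L^2$ pairing but something that only makes sense for the steady eigenvalue equation with $\lambda$ real; most likely a $\lambda$-dependent or profile-dependent weight, or an integration-by-parts identity that trades a bad term at one order for a good term at another using the \emph{ordered} monotonicity $v_+<\bV<v_-$. A natural candidate, generalizing \cite{BHRZ}, is to multiply by $\bar v$ times a carefully chosen function of $\bV$ (for instance involving an antiderivative of $1/(\bV f(\bV))$) designed so that the dangerous cross term between the capillarity piece and the convection piece telescopes. I would also keep track of whether reality of $\lambda$ is used twice: once to make $\lambda|v|^2$ and $\lambda f(\bV)|v|^2$ nonnegative, and once to ensure that cross terms like $\lambda\,\mathrm{Re}(v'\bar v)$ are exact derivatives rather than contributing indefinitely.

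The main obstacle I anticipate is controlling the capillarity-induced terms \emph{uniformly in amplitude}: unlike Section~\ref{spec}, here $\varepsilon$ is not small, so the smallness of $|\bV_x|$ is only $O(\varepsilon^2)$ with $\varepsilon$ order one, and one cannot simply absorb $d$-terms into $\varepsilon$-small quantities. The estimate must instead be genuinely sign-favorable: the good terms must dominate structurally, not perturbatively. Getting the weight right so that the coefficient of $d\int|v'|^2$ stays positive (this is presumably where a condition like $1+ap'(v_-)>0$, equivalently the monotone regime $d\le d_*$, or the adiabatic normalization, enters) while the leftover commutator terms remain lower order is the crux; the rest should be a bookkeeping exercise in integration by parts and Young's inequality, after which $\mathcal R(\lambda)\ge 0$ real is shown to be incompatible with a nontrivial eigenfunction.
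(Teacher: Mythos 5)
Your overall plan---multiply \eqref{ep:2} by $\bar v$, exploit $\bar\lambda=\lambda$, use \eqref{ep:1} to eliminate $u$, and let monotonicity supply the signs---is the same family of argument as the paper's, and the choice of $\bar v$ as multiplier is exactly right. But the proposal stops short of the two observations that actually make the proof close, and the fallback mechanisms you offer in their place would not work.

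First, the capillarity term. You anticipate that $-d\ip v'''\bar v$ will produce a term $+d\int|v'|^2\cdot(\text{weight})$ whose coefficient must be kept positive, and you flag this as the crux, speculating that a weight built from an antiderivative of $1/(\bV f(\bV))$ or the condition $d\le d_*$ is needed to control it. In fact, with the unweighted multiplier $\bar v$ one gets $-d\ip v'''\bar v = d\ip v''\bar v'$, and $\R\ip v''\bar v' = \tfrac12\ip(|v'|^2)' = 0$. The capillarity contribution vanishes identically from the real part; $d$ never appears in the final identity and no condition on $d$ beyond monotonicity of the profile is used. Your anticipated obstacle is a phantom, and chasing it with a nontrivial weight would only generate genuine commutator terms that then do need to be controlled. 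Second, the residual terms proportional to $\bV_x$. You propose to absorb them by Young's inequality using $|\bV_x|\le\varepsilon^2/4$, while simultaneously (and correctly) observing that for large amplitude nothing is small, so a perturbative absorption cannot work. The resolution is that no absorption is needed: after the substitutions, every leftover term carries a factor of $-\bV_x$ or $-p''(\bV)\bV_x$ and is therefore \emph{nonnegative} because $\bV_x<0$ and $p''>0$; the identity reads
\begin{equation*}
\lambda \ip \Bigl(2 - \frac{\bV_x}{2\bV^2}\Bigr)|v|^2 \;-\; \frac{a}{2}\ip p''(\bV)\,\bV_x\,|v|^2 \;+\; \ip\frac{|v'|^2}{\bV} \;=\;0,
\end{equation*}
a sum of nonnegative terms equal to zero, giving the contradiction directly. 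Until you verify that the $\bV_x$-terms come out with these favorable signs (rather than merely being ``lower order''), and notice that the dispersive term drops out of the real part, the argument has a genuine gap: the sign-definiteness you correctly identify as necessary is asserted as a hope, not established.
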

\begin{proof}
Suppose that $\lambda\in [0,\infty)$.  Since profiles are monotone, we have that $\bV_x < 0$. We multiply \eqref{ep:2} by the conjugate $\bar{v}$ and integrate in $x$ from $-\infty$ to $\infty$.  This gives
\[
\ip \lambda u \bar{v} + \ip u' \bar{v} -  \ip f(\bV) v' \bar{v} = \ip \frac{u''\bar{v}}{\bV} - d \ip v''' \bar{v}.
\]
Notice that on the real line, $\bar{\lambda}=\lambda$.  Thus, we have
\[
\ip \bar{\lambda}u \bar{v} + \ip u' \bar{v} - \ip f(\bV) v' \bar{v} = \ip \frac{u''\bar{v}}{\bV} + d \ip v'' \bar{v}'.
\]
Using \eqref{ep:1} to substitute for $\overline{\lambda v}$ in the first term and for $u''$ in the last term, we get
\[
\ip u (\bar{u}'-\bar{v}') + \ip u' \bar{v} - \ip f(\bV) v' \bar{v} = \ip \frac{(\lambda v' + v'')\bar{v}}{\bV} + d \ip v'' \bar{v}'.
\]
Separating terms and simplifying gives
\[
\ip u \bar{u}' + 2 \ip u' \bar{v} -\ip f(\bV) v' \bar{v} = \lambda  \ip \frac{v'\bar{v}}{\bV} + \ip \frac{v'' \bar{v}}{\bV} + d \ip v'' \bar{v}'.
\]
We further simplify by substituting for $u'$ in the second term and integrating  the last terms by parts to give,
\[
\ip u \bar{u}' + 2 \ip (\lambda v + v') \bar{v} - \ip \left( f(\bV) + \frac{\bV_x}{\bV^2} + \frac{\lambda}{\bV} \right) v' \bar{v} + \ip  \frac{|v'|^2}{\bV} = d \ip v'' \bar{v}',
\]
which yields
\[
\ip u \bar{u}' + 2 \lambda \ip |v|^2 + \ip \left(2 +a p'(\bV) - \frac{\lambda}{\bV}\right) v' \bar{v} + \ip \frac{|v'|^2}{\bV} = d \ip v'' \bar{v}'.
\]
By taking the real part (recall that $\lambda \in [0,\infty)$), we arrive at
\[
\lambda \ip \left(2 - \frac{\bV_x}{2\bV^2} \right) |v|^2 - \frac{a}{2} \ip  p''(\bV) \bV_x |v|^2 + \ip \frac{|v'|^2}{\bV} = 0.
\]
This is a contradiction.  Thus, there are no positive real eigenvalues for monotone shock layers in \eqref{slemrod} .
\end{proof}



\section{High-frequency bounds}
\label{hfb}

In this section, we prove high-frequency spectral bounds for monotone large-amplitude smooth shock profiles.  This provides a ceiling as to how far along both the imaginary and real axes that one must explore for point spectra when doing Evans function computations.  Indeed to check for roots of the Evans function in the unstable half-plane, say using the argument principle, one needs only compute within these bounds.  If no roots are found therein, then we have a numerical verification of spectral stability.  We remark that in this section and the next, we depart from the generality of an ideal gas, and restrict ourselves to the adiabatic case; see Section \ref{adiabatic}.  We remark, however, that we could have carried out our analysis for an ideal gas as long as $v_-=1$, which we can achieve by rescaling.  We have the following lemmata:

\begin{lemma}
The following identity holds for $\varepsilon_1,\varepsilon_2,\theta>0$ and $\R \lambda \geq 0$:
\begin{align}
(\R(\lambda) &+ |\I (\lambda)|)  \ip \bV |u|^2 + (1-\varepsilon_1 - \varepsilon_2) \ip |u'|^2\notag\\
& \leq \left[ \frac{1}{4\varepsilon_1} + \frac{C}{2 \theta}\right] \int \bV |u|^2  + d^2 \int \left[\frac{1}{4} + \frac{1}{2\varepsilon_2}\right] |v''|^2 + \theta \int f(\bV) |v'|^2  \label{id1},
\end{align}
where $C = \sup |f(\bV)\bV|$.
\end{lemma}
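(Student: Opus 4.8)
The plan is to establish \eqref{id1} as a spectral energy estimate obtained by multiplying the integrated momentum equation \eqref{ep:2} by $\bV\bar u$ and integrating over $\mathbb{R}$. Since the integrated eigenfunctions and their derivatives decay exponentially (as recalled just before \eqref{ep}), every boundary term arising in the integrations by parts vanishes. The viscous term becomes $\ip \bV^{-1}u''\cdot\bV\bar u=\ip u''\bar u=-\ip|u'|^2$, and a single integration by parts on the capillarity term gives $-d\ip\bV v'''\bar u=d\ip\bV v''\bar u'+d\ip\bV_x v''\bar u$. Moving $-\ip|u'|^2$ to the left yields the identity
\[
\ip\lambda\bV|u|^2+\ip|u'|^2=-\ip\bV u'\bar u+\ip\bV f(\bV)v'\bar u+d\ip\bV v''\bar u'+d\ip\bV_x v''\bar u .
\]

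Next I would take the real and imaginary parts of this identity and combine them: since $\R\lambda\ge0$, adding the real part to $\mathrm{sgn}(\I(\lambda))$ times the imaginary part turns the coefficient of $\ip\bV|u|^2$ into $\R(\lambda)+|\I(\lambda)|$, while the real term $\ip|u'|^2$ is unaffected, and the right-hand side becomes a sum of terms each bounded by $\sqrt2$ times the modulus of the corresponding integral (for $\ip\bV u'\bar u$ one does even better, since by monotonicity its real part $\tfrac12\ip\bV_x|u|^2\le0$ is favourable, so only its imaginary part is paid for). I would then absorb the four terms one at a time by Young's inequality, using that after the rescaling of Section~\ref{adiabatic} one has $0<v_+\le\bV\le1$ together with the uniform profile bound $|\bV_x|\le\varepsilon^2/4$ of Theorem~\ref{asympthm}: the term $\ip\bV u'\bar u$ is split with weight $\varepsilon_1$ to give $\varepsilon_1\ip|u'|^2+\tfrac{1}{4\varepsilon_1}\ip\bV|u|^2$ (using $\bV\le1$ to remove the weight on $|u'|^2$); writing $\bV f(\bV)v'\bar u=(f(\bV)^{1/2}v')(f(\bV)^{1/2}\bV\bar u)$ and using $f(\bV)\bV^2\le C\bV$ with $C=\sup|f(\bV)\bV|$ gives $\theta\ip f(\bV)|v'|^2+\tfrac{C}{2\theta}\ip\bV|u|^2$; the term $d\ip\bV v''\bar u'$ is split with weight $\varepsilon_2$ (again $\bV\le1$) to give $\tfrac{d^2}{2\varepsilon_2}\ip|v''|^2+\varepsilon_2\ip|u'|^2$; and $d\ip\bV_x v''\bar u$ is controlled via $|\bV_x|\le\varepsilon^2/4$, producing the $\tfrac{d^2}{4}\ip|v''|^2$ contribution. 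Collecting the $\ip\bV|u|^2$, $\ip|v''|^2$, $\ip f(\bV)|v'|^2$ and $\ip|u'|^2$ pieces and rearranging then gives \eqref{id1}.

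The step I expect to be the main obstacle is the control of the two capillarity-generated cross terms $d\ip\bV v''\bar u'$ and $d\ip\bV_x v''\bar u$, which have no analogue in the purely viscous case of \cite{BHRZ}. The point is to bound them using only $\ip|v''|^2$, $\ip|u'|^2$ and $\ip\bV|u|^2$ — in particular to avoid generating an unweighted $\ip|v'|^2$ or any term involving $\bV_{xx}$ — and this is precisely why the third-order term is integrated by parts only once and why the derivative estimate $|\bV_x|\le\varepsilon^2/4$, rather than a bound on higher derivatives of $\bV$, is the right tool. Tracking the numerical constants through the real/imaginary combination against the three split parameters $\varepsilon_1,\varepsilon_2,\theta$ is then routine.
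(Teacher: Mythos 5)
Your proposal is essentially the paper's own proof: multiply \eqref{ep:2} by $\bV\bar u$, integrate by parts once on the capillarity term, combine real and imaginary parts (paying at most a factor $\sqrt2$ on each right-hand term), and apply Young's inequality with weights $\varepsilon_1,\theta,\varepsilon_2$ together with $\bV\le 1$, $f(\bV)\bV\le C$, and $|\bV_x|\le\varepsilon^2/4$. The only detail to state explicitly is that the Young split of $d\ip \bV_x v''\bar u$ also produces a term $\tfrac12\ip|\bV_x|\,|u|^2$, which is absorbed by the favourable real-part term $\tfrac12\ip\bV_x|u|^2$ you noted parenthetically --- exactly the cancellation used in the paper.
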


\begin{proof}
We multiply \eqref{ep:2} by $\bV {\bar u}$ and integrate along $x$ from $-\infty$ to $\infty$.  This yields
\[
\lambda \ip \bV |u|^2 + \ip \bV u'\bar{u} + \ip |u'|^2 = \ip f(\bV) \bV v'\bar{u} + d\int \bV_x v'' \bar{u} + d\int \bV v'' \bar{u}' .
\]
Taking the real and imaginary parts, adding them together, and noting that $|\R(z)| + |\I(z)| \leq \sqrt{2}|z|$, yields
\begin{align*}
&(\R(\lambda) + |\I (\lambda)|)  \ip \bV |u|^2 -\frac{1}{2}\ip \bV_x|u|^2 + \ip |u'|^2\\
& \leq \ip \bV |u||u'| + \sqrt{2} \ip f(\bV)\bV|v'||u| + \sqrt{2} d \left[ \ip |\bV_x| |v''| |u| + \ip \bV |v''| |u'|\right] \\
& \leq \varepsilon_1 \ip \bV |u'|^2 + \frac{1}{4\varepsilon_1}\ip \bV |u|^2 +\theta \ip f(\bV) |v'|^2 + \frac{1}{2\theta}\ip f(\bV) \bV^2|u|^2 \\
&\qquad + \frac{1}{2} \ip |\bV_x| |u|^2 + d^2 \int |\bV_x| |v''|^2 + \varepsilon_2 \ip \bV |u'|^2 + \frac{d^2}{2 \varepsilon_2} \ip \bV |v''|^2\\
& < (\varepsilon_1+\varepsilon_2) \ip |u'|^2 + \left[ \frac{1}{4\varepsilon_1} + \frac{C}{2\theta}\right] \ip \bV |u|^2 +\theta \ip f(\bV) |v'|^2 \\
&\qquad + \frac{1}{2} \ip |\bV_x| |u|^2 + d^2 \int \left[\frac{1}{4} + \frac{1}{2\varepsilon_2}  \right]|v''|^2.
\end{align*}
Rearranging terms yields \eqref{id1}.
\end{proof}

\begin{lemma}
\label{kawashima}
The following identity holds for $\R \lambda \geq 0$:
\begin{equation}
\label{id2}
\ip |u'|^2 \geq \frac{1}{2} \ip \left[ f(\bV) - p'(\bV) \right] |v'|^2 + d \ip |v''|^2.
\end{equation}
\end{lemma}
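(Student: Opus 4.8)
The plan is to obtain a clean identity for $\ip |u'|^2$ by testing the integrated momentum equation \eqref{ep:2} against $\bar v'$, and then to dispatch the $\lambda$-dependent remainder by pairing the result with the continuity equation \eqref{ep:1}. The point is that the $\bar v'$–test already produces the two "compensated" dissipative quantities $\ip f(\bV)|v'|^2$ and $d\ip|v''|^2$, but also an indefinite cross term, which \eqref{ep:1} is tailor-made to cancel.

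Concretely, I would multiply \eqref{ep:2} by $\bar v'$ and integrate over $\mathbb{R}$ (all boundary terms vanish by the decay of the integrated eigenfunction and its derivatives at $\pm\infty$). On the right, the dispersive term integrates by parts to $-d\ip v'''\bar v' = d\ip|v''|^2$; the viscous term, after the substitution $u'' = \lambda v' + v''$ obtained from differentiating \eqref{ep:1}, becomes $\lambda\ip \frac{|v'|^2}{\bV} + \ip \frac{v''\bar v'}{\bV}$, whose real part is $\R(\lambda)\ip\frac{|v'|^2}{\bV} + \tfrac12\ip\frac{\bV_x}{\bV^2}|v'|^2$. On the left, $\ip f(\bV)v'\bar v' = \ip f(\bV)|v'|^2$ is already real, and the remaining two terms are handled by integrating $\lambda\ip u\bar v'$ by parts and then substituting $u' = \lambda v + v'$ in it and in $\ip u'\bar v'$, turning them into $-\lambda^2\ip|v|^2 - 2\lambda\ip v'\bar v + \ip|v'|^2$. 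Since $\ip v'\bar v$ is purely imaginary — $\ip(v'\bar v + v\bar v') = \ip(|v|^2)' = 0$ — writing $\ip v'\bar v = i\mu$ with $\mu\in\mathbb{R}$ and taking the real part gives
\[
-((\R\lambda)^2-(\I\lambda)^2)\ip|v|^2 + 2\mu\,\I\lambda + \ip|v'|^2 - \ip f(\bV)|v'|^2 = \R(\lambda)\ip\frac{|v'|^2}{\bV} + \tfrac12\ip\frac{\bV_x}{\bV^2}|v'|^2 + d\ip|v''|^2 .
\]

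The one step that is not pure bookkeeping is the removal of the indefinite term $2\mu\,\I\lambda$. The resolution is that \eqref{ep:1} supplies precisely this quantity: from $u' = \lambda v + v'$ one gets $\ip|u'|^2 = |\lambda|^2\ip|v|^2 + \ip|v'|^2 + 2\mu\,\I\lambda$, so substituting $2\mu\,\I\lambda = \ip|u'|^2 - |\lambda|^2\ip|v|^2 - \ip|v'|^2$ into the identity above and simplifying (the $\ip|v|^2$ coefficient collapses to $2(\R\lambda)^2$) yields
\[
\ip|u'|^2 = 2(\R\lambda)^2\ip|v|^2 + \R(\lambda)\ip\frac{|v'|^2}{\bV} + \ip f(\bV)|v'|^2 + \tfrac12\ip\frac{\bV_x}{\bV^2}|v'|^2 + d\ip|v''|^2 .
\]
Since $\R\lambda\ge 0$ and $\bV>0$, the first two terms on the right are non-negative and may be discarded, and rewriting $\ip f(\bV)|v'|^2 + \tfrac12\ip\frac{\bV_x}{\bV^2}|v'|^2$ by means of $f(\bV) = -ap'(\bV) - \bV_x/\bV^2$ and the monotonicity $\bV_x < 0$ puts the surviving $v'$–integral in the form asserted in \eqref{id2}. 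The main obstacle is exactly the indefinite imaginary cross term flagged above; the whole reason for combining the $\bar v'$–test of \eqref{ep:2} with \eqref{ep:1} is that it converts this remainder into a sum of manifestly non-negative quantities plus the desired lower bound, after which only elementary algebra remains.
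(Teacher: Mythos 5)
Your proposal is correct and is essentially the paper's own argument: you test \eqref{ep:2} with $\bar v'$, use \eqref{ep:1} (via $u''=\lambda v'+v''$ and $u'=\lambda v+v'$) on both sides, and arrive at exactly the same identity $\ip|u'|^2 = 2(\R\lambda)^2\ip|v|^2 + \R(\lambda)\ip\frac{|v'|^2}{\bV} + \ip\bigl[f(\bV)+\tfrac{\bV_x}{2\bV^2}\bigr]|v'|^2 + d\ip|v''|^2$ before discarding the nonnegative $\lambda$-terms. The only difference is bookkeeping of the indefinite cross term — the paper removes it by rewriting the left side as $-2\R(\lambda)\ip u'\bar v - \ip u\bar u''$, while you track $\ip v'\bar v = i\mu$ explicitly and cancel it through the expansion of $\ip|u'|^2$ — and your final rewriting to the stated form $\tfrac12[f(\bV)-p'(\bV)]$ involves precisely the same (unremarked) step as the paper's, since the exact identity actually yields $\tfrac12[f(\bV)-a\,p'(\bV)]$.
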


\begin{proof}
We multiply \eqref{ep:2} by ${\bar v'}$ and integrate along $x$ from $-\infty$ to $\infty$.  This yields
\[
\lambda \ip u\bar{v}' + \ip u'\bar{v}' - \ip f(\bV) |v'|^2 = \ip \frac{1}{\bV}u''\bar{v}' - d\ip v'''\bar{v}'.
\]
Using \eqref{ep:1} on the right-hand side, integrating by parts, and taking the real part gives
\[
\R \left[ \lambda \ip u\bar{v}' + \ip u'\bar{v}'\right] = \ip \left[ f(\bV) + \frac{\bV_x}{2 \bV^2} \right] |v'|^2 + \R(\lambda)\ip \frac{|v'|^2}{\bV} + d\ip|v''|^2.
\]
In our domain of interest, this yields
\begin{equation}
\label{id3_1}
\R \left[ \lambda \ip u\bar{v}' + \ip u'\bar{v}'\right] \geq \frac{1}{2} \ip \left[f(\bV) - p'(\bV) \right] |v'|^2  + d\ip|v''|^2
\end{equation}
Now we manipulate the left-hand side.  Note that
\begin{align*}
\lambda \ip u\bar{v}' + \ip u'\bar{v}' &= (\lambda+\bar{\lambda}) \ip u\bar{v}' - \ip u(\bar{\lambda}\bar{v}' + \bar{v}'')\\
&= -2\R(\lambda) \ip u' \bar{v} - \ip u \bar{u}''\\
&= -2\R(\lambda) \ip (\lambda v + v') \bar{v} + \ip |u'|^2.
\end{align*}
Hence, by taking the real part we get
\[
\R \left[ \lambda \ip u\bar{v}' + \ip u'\bar{v}'\right] = \ip |u'|^2 - 2\R(\lambda)^2 \ip |v|^2.
\]
This combines with \eqref{id3_1} to give \eqref{id2}.
\end{proof}

Now we prove our high-frequency bounds.

\begin{theorem}
\label{thmhfb}
For a monotone profile with $0 \leq d \leq 1/3$, any eigenvalue $\lambda$ of \eqref{ep} with nonnegative real part satisfies
\begin{equation}
\label{hfbounds}
\R(\lambda) + |\I(\lambda)| \leq 3 + \frac{12 C}{5},
\end{equation}
where $C = \sup|f(\bV)\bV|$.
\end{theorem}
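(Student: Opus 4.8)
The plan is to combine Lemma~\ref{kawashima} with the identity \eqref{id1}: I will use \eqref{id2} to absorb the $\int|v''|^2$ and $\int f(\bV)|v'|^2$ terms on the right-hand side of \eqref{id1} into the $\int|u'|^2$ term on its left-hand side, then discard what remains of $\int|u'|^2$ and divide by $\ip\bV|u|^2$.

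First I would note that for an adiabatic monotone profile we have $\bV_x<0$, hence $f(\bV)>0$ and $-p'(\bV)>0$, so $f(\bV)-p'(\bV)\ge f(\bV)$ and \eqref{id2} yields the cleaner inequality $\ip|u'|^2 \ge \tfrac12\ip f(\bV)|v'|^2 + d\ip|v''|^2$. Multiplying this by a constant $c>0$ and matching coefficients shows that the bracket on the right of \eqref{id1}, namely $d^2[\tfrac14+\tfrac1{2\varepsilon_2}]\int|v''|^2 + \theta\int f(\bV)|v'|^2$, is at most $c\ip|u'|^2$ as soon as $c\ge 2\theta$ and $c\ge d[\tfrac14+\tfrac1{2\varepsilon_2}]$. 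Substituting this bound into \eqref{id1} gives
\[
(\R(\lambda)+|\I(\lambda)|)\ip\bV|u|^2 + (1-\varepsilon_1-\varepsilon_2-c)\ip|u'|^2 \;\le\; \Big[\tfrac1{4\varepsilon_1}+\tfrac{C}{2\theta}\Big]\ip\bV|u|^2 .
\]

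Now I would choose the parameters so that the coefficient of $\ip|u'|^2$ is non-negative while $\tfrac1{4\varepsilon_1}+\tfrac{C}{2\theta}$ is as small as possible. The choice $\varepsilon_1=\tfrac1{12}$, $\varepsilon_2=\tfrac12$, $\theta=\tfrac5{24}$ gives $2\theta=\tfrac5{12}$ and $d[\tfrac14+\tfrac1{2\varepsilon_2}]=\tfrac54 d\le\tfrac5{12}$ precisely because $d\le\tfrac13$; thus one may take $c=\tfrac5{12}$, and then $1-\varepsilon_1-\varepsilon_2-c=0$. The $\ip|u'|^2$ term disappears, and since any nontrivial eigenfunction of \eqref{ep} has $u\not\equiv0$ (if $u\equiv0$ then \eqref{ep:1} forces $v(x)=v(0)e^{-\lambda x}$, incompatible with $v(-\infty)=0$ when $\R\lambda\ge0$ unless $v\equiv0$), we have $\ip\bV|u|^2>0$ and may divide to obtain $\R(\lambda)+|\I(\lambda)|\le \tfrac1{4\varepsilon_1}+\tfrac{C}{2\theta}=3+\tfrac{12C}{5}$, which is \eqref{hfbounds}.

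The estimate is routine given the two lemmas; the only delicate point is the bookkeeping in the choice of $(\varepsilon_1,\varepsilon_2,\theta)$, which must be arranged so that the $\ip|u'|^2$ coefficient is exactly non-negative. This is precisely where the hypothesis $d\le 1/3$ enters, and it is sharp for the argument as organized: a larger $d$ would force $c>\tfrac5{12}$ and destroy the cancellation, so handling larger capillarity strengths would require a genuinely different balance of constants or a new estimate. I do not foresee any other obstacle.
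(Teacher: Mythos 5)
Your proposal is correct and follows essentially the same route as the paper: it combines \eqref{id1} with Lemma~\ref{kawashima} (dropping the favorable $-p'(\bV)$ term), cancels the $f(\bV)|v'|^2$ and $|v''|^2$ contributions using the hypothesis $d\le 1/3$, and uses exactly the paper's parameter choices $\varepsilon_1=1/12$, $\varepsilon_2=1/2$, $\theta=5/24=(1-\varepsilon_1-\varepsilon_2)/2$ to arrive at $3+12C/5$. Your added remark that a nontrivial eigenfunction must have $u\not\equiv 0$, so that one may divide by $\ip\bV|u|^2$, is a small justification the paper leaves implicit.
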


\begin{proof}
Combining \eqref{id1} and \eqref{id2}, we have
\begin{align*}
(\R(\lambda) &+ |\I (\lambda)|)  \ip \bV |u|^2 + (1-\varepsilon_1 - \varepsilon_2) \left[ \frac{1}{2}\ip f(\bV)|v'|^2 + d \ip |v''|^2 \right] \\
& \leq \left[ \frac{1}{4\varepsilon_1} + \frac{C}{2 \theta}\right] \int \bV |u|^2  + d^2 \int \left[\frac{1}{4} + \frac{1}{2\varepsilon_2}\right] |v''|^2 + \theta \int f(\bV) |v'|^2.
\end{align*}
Setting $\theta = (1-\varepsilon_1 - \varepsilon_2)/2$ yields
\begin{align*}
(\R(\lambda) &+ |\I (\lambda)|)  \ip \bV |u|^2 + (1-\varepsilon_1 - \varepsilon_2) d \ip |v''|^2 \\
& \leq \left[ \frac{1}{4\varepsilon_1} + \frac{C}{1-\varepsilon_1 - \varepsilon_2}\right] \int \bV |u|^2  + d^2 \int \left[\frac{1}{4} + \frac{1}{2\varepsilon_2}\right] |v''|^2.
\end{align*}
Hence for  $0 \leq d \leq 1/3$, choose $\varepsilon_1 = 1/12$ and $\varepsilon_2 = 1/2$ to get \eqref{hfbounds}.
\end{proof}

\begin{remark}
For an adiabatic gas, $p(\bV) = v^{-\gamma}$, $\gamma\geq 1$, we can show that $C \leq \gamma$; see \cite{BHRZ}.  Thus in the range $\gamma\in[1,3]$ we can safely bound the unstable spectrum with a half circle of radius 12.  This compactifies the region of admissible unstable spectrum, thus allowing us to numerically compute winding numbers of the Evans function and determine whether shock layers are spectrally stable.
\end{remark}


\section{Evans function computation}
\label{evans}

In this section, we numerically compute the Evans function to determine whether any unstable eigenvalues exist in our system.  The Evans function $D(\lambda)$ is analytic to the right of the essential spectrum and is defined as the Wronskian of decaying solutions of \eqref{ep}; see \cite{AGJ}.  In a spirit similar to the characteristic polynomial, we have that $D(\lambda)=0$ if and only if $\lambda$ is an eigenvalue of the linearized operator \eqref{ep}.  While the Evans function is generally too complex to compute analytically, it can readily be computed numerically; see \cite{HuZ.2} and references within.

Since the Evans function is analytic in the region of interest, we can numerically compute its winding number in the right-half plane.  This allows us to systematically locate roots (and hence unstable eigenvalues) within.  As a result, spectral stability can be determined, and in the case of instability, one can produce bifurcation diagrams to illustrate and observe its onset.  This approach was first used by Evans and Feroe \cite{EF} and has been applied to various systems since; see for example \cite{PSW,AS,Br.2,BDG}.

\subsection{Numerical Setup}

We begin by writing \eqref{ep} as a first-order system $W' = A(x,\lambda) W$, where
\begin{equation}
\label{evans_ode}
A(x,\lambda) = \begin{pmatrix}0 & \lambda & 1 & 0\\0 & 0 & 1 & 0\\0 & 0 & 0 & 1\\ \lambda/d&\lambda/d& h/d & -(d\bV)^{-1}\end{pmatrix},\quad W = \begin{pmatrix} u\\v\\v'\\v''\end{pmatrix},
\end{equation}
and $h = h(\bV,\lambda) := 1+a p'(\bV)+\bV_x/\bV^2-\lambda/\bV$.  Note that eigenvalues of \eqref{ep} correspond to nontrivial solutions of $W(x)$ for which the boundary conditions $W(\pm\infty)=0$ are satisfied.  We remark that since $\bV$ is asymptotically constant in $x$, then so is $A(x,\lambda)$.  Thus at each end-state, we have the constant-coefficient system
\begin{equation}
\label{apm}
W' = A_\pm(\lambda) W, \qquad A_\pm(\lambda) := \lim_{x\rightarrow\pm\infty}A(x,\lambda).
\end{equation}
Hence solutions that satisfy the needed boundary condition must emerge from the 2-dimensional unstable manifold $W_1^-(x) \wedge W_2^-(x)$ at $x=-\infty$ and also the 2-dimensional stable manifold $W_3^+(x) \wedge W_4^+(x)$ at $x=\infty$.  In other words, eigenvalues of \eqref{ep} correspond to the values of $\lambda$ for which these two manifolds intersect, or more precisely, when $D(\lambda) = 0$, where
\[
D(\lambda) := (W_1^- \wedge W_2^- \wedge W_3^+ \wedge W_4^+)_{\mid x=0} = \det(W_1^- W_2^- W_3^+ W_4^+)_{\mid x=0}.
\]

We cannot naively produce the stable and unstable manifolds numerically.  Indeed with two exponential growth and decay modes, problems with stiffness arise.  Hence, we use the compound-matrix method to analytically track the stable and unstable manifolds; see \cite{AlBr,BDG, Br.1, Br.2,HuZ.2}.  Specifically we lift $A(x,\lambda)$ into the exterior-product space $\Lambda^2(\mathbb{C}^4) \approx \mathbb{C}^6$ to get
\[
A^{(2)}(x,\lambda) = \begin{pmatrix}
0&1&0&-1&0&0\\
0&0&1&\lambda&0&0\\
\lambda/d &h/d&-(d\bV)^{-1}&0&\lambda&1\\
0&0&0&0&1&0\\
-\lambda/d&0&0&h/d&-(d\bV)^{-1}&1\\
0&-\lambda/d&0&-\lambda/d&0 & -(d\bV)^{-1}.
\end{pmatrix}
\]
We then consider single trajectories $W_\pm(x)$ of the ``lifted'' problem
\[
W' = A^{(2)}(x,\lambda) W
\]
on each side corresponding to the simple dominant growth and decay modes at the left and right end states, respectively.  These trajectories correspond to the 2-forms  $W_1^-(x) \wedge W_2^-(x)$ and $W_3^+(x) \wedge W_4^+(x)$, and can be effectively wedged together when they meet at zero; see \cite{AlBr} for an excellent overview of this method.  

As an alternative, we consider the adjoint formulation of the Evans function \cite{PW,BSZ}.  Specifically, we integrate the trajectory $\widetilde{W}_+$ along the largest growth mode of the adjoint ODE
\begin{equation}
\label{adjode}
\widetilde{W}' = -A^{(2)}(x,\lambda)^* \widetilde{W}.
\end{equation}
starting at $x=\infty$.  We then define the (adjoint) Evans function to be $D_+(\lambda): = (\widetilde{W}_+\cdot  W_-)_{\mid x=0}$.  Note that $\widetilde{W}_+$ corresponds to the orthogonal complement of the 2-form $W_3^+(x) \wedge W_4^+(x)$ and so orthogonality of $\widetilde{W}_+$ and $W_-$ corresponds to intersection of the stable and unstable manifolds.

To further improve the numerical efficiency and accuracy of the shooting scheme, we rescale $W$ and $\widetilde{W}$ to remove exponential growth/decay at infinity, and thus eliminate potential problems with stiffness.  Specifically, we let $W(x) = e^{\mu^- x} V(x)$, where $\mu^-$ is the largest growth rate of the unstable manifold at $x=-\infty$, and we solve instead $V'(x) = (A^{(2)}(x,\lambda)-\mu^- I)V(x)$.  We initialize $V(x)$ at $x=-\infty$ as eigenvector $r^-$ of $A^{(2)}_-(\lambda)$ corresponding to $\mu^-$.  Similarly, it is straightforward to rescale and initialize $\widetilde{W}(x)$ at $x=\infty$.  This method is known to have excellent accuracy \cite{Br.1,Br.2,BrZ,BDG,HuZ.2,BHRZ};  in addition, the adaptive refinement gives automatic error control.  Finally, in order to maintain analyticity, the initial eigenvectors $r^-(\lambda)$ are chosen analytically using Kato's method; see \cite[pg. 99]{Kato} and also \cite{BrZ,BDG,HSZ}.

\begin{figure}[t]
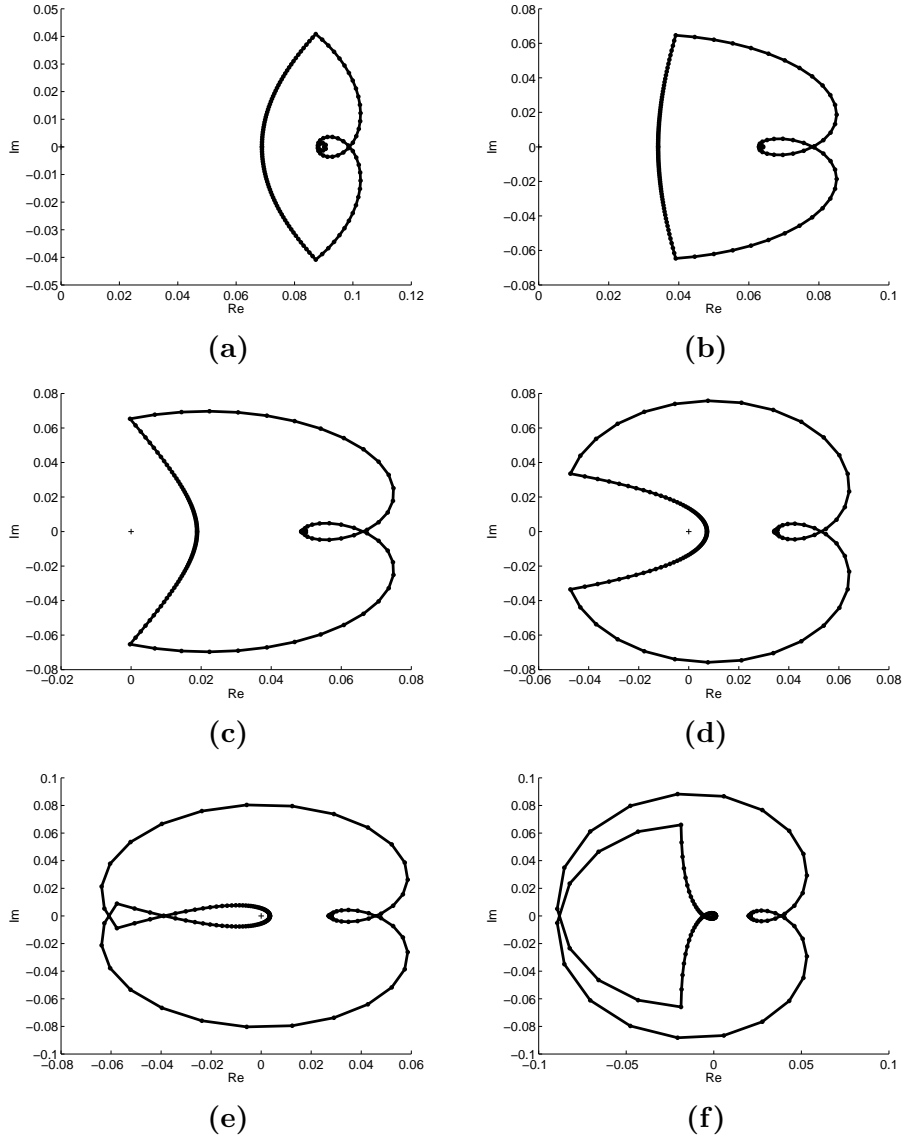

\begin{center}
$\begin{array}{cc}
\includegraphics[width=6cm]{figures//typical1} & \includegraphics[width=6cm]{figures//typical2} \\
\mbox{\bf (a)} & \mbox{\bf (b)}\\
\includegraphics[width=6cm]{figures//typical3} & \includegraphics[width=6cm]{figures//typical4} \\
\mbox{\bf (c)} & \mbox{\bf (d)}\\
\includegraphics[width=6cm]{figures//typical5} & \includegraphics[width=6cm]{figures//typical6} \\
\mbox{\bf (e)} & \mbox{\bf (f)}\\
\end{array}$
\end{center}
\caption{Evans function output for semi-circular contour of radius 12 with $d=0.45$ and (a) $v_+=0.65$, (b) $v_+=0.45$, (c) $v_+=0.35$, (d) $v_+=0.25$, (e) $v_+=0.20$, and (f) $v_+=0.15$.  Although the contours wrap around the origin as the shock strength increases, they clearly have winding number zero, thus demonstrating spectral stability.}
\label{runs}
\end{figure}

\subsection{Numerical Experiments}

We truncate the domain to a sufficiently large interval $[L_-,L_+]$ in order to do numerical computation.  Some care needs to be taken, however, to make sure that we go out far enough to produce good results.  Our experiments, described below, were primarily conducted using $L_\pm=\pm 25$, but for weaker shocks we had to go out as far as $L_\pm=\pm 50$.  For highly oscillatory profiles, very large values of $L_\pm$ are needed because the (under-damped) decay rate can be small.  To compute the profile, we used Matlab's {\tt bvp4c} routine, which is an adaptive Lobatto quadrature scheme.

\begin{figure}[t]
\begin{center}
\includegraphics[width=12cm]{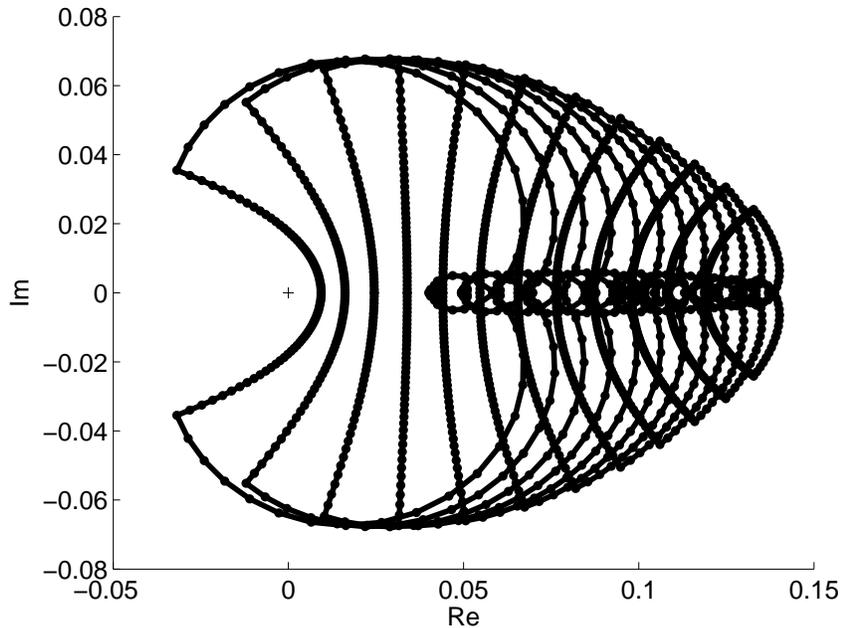} 
\end{center}
\caption{Evans function output of a semi-circular contour with $d=0.75$ and $v_+\in [0.20,0.80]$.  As the shock strength increases, the contours get closer to the origin and begin to wrap around it.  In the small shock limit, the contour drifts away from the origin and gets smaller.}
\label{several}
\end{figure}

\begin{figure}[t]
\begin{center}
\includegraphics[width=12cm]{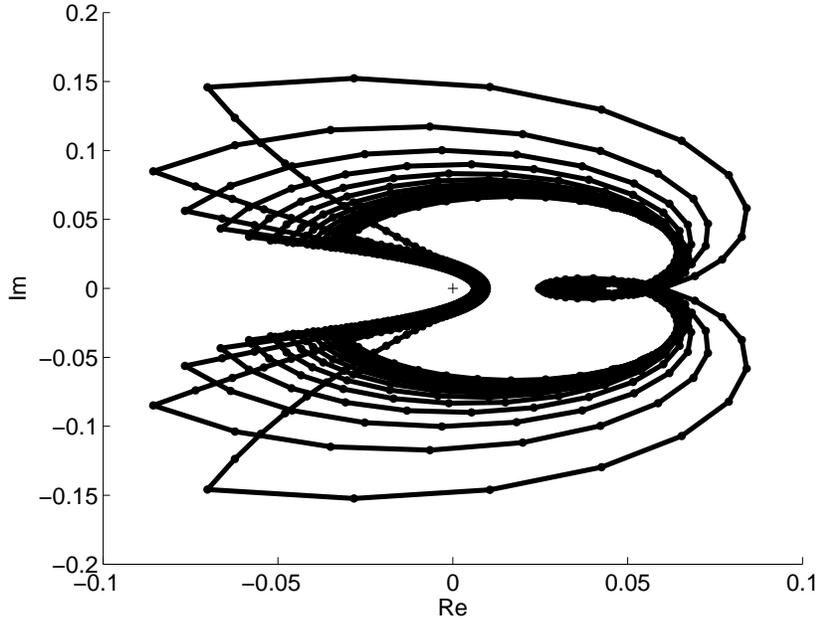} 
\end{center}
\caption{Evans function output of a semi-circular contour with $v_+=0.25$ and $d\in [0.15,0.80]$. As the $d$ decreases, the contours get larger and more spread out. }
\label{several2}
\end{figure}

\begin{figure}[t]
\begin{center}
\includegraphics[width=12cm]{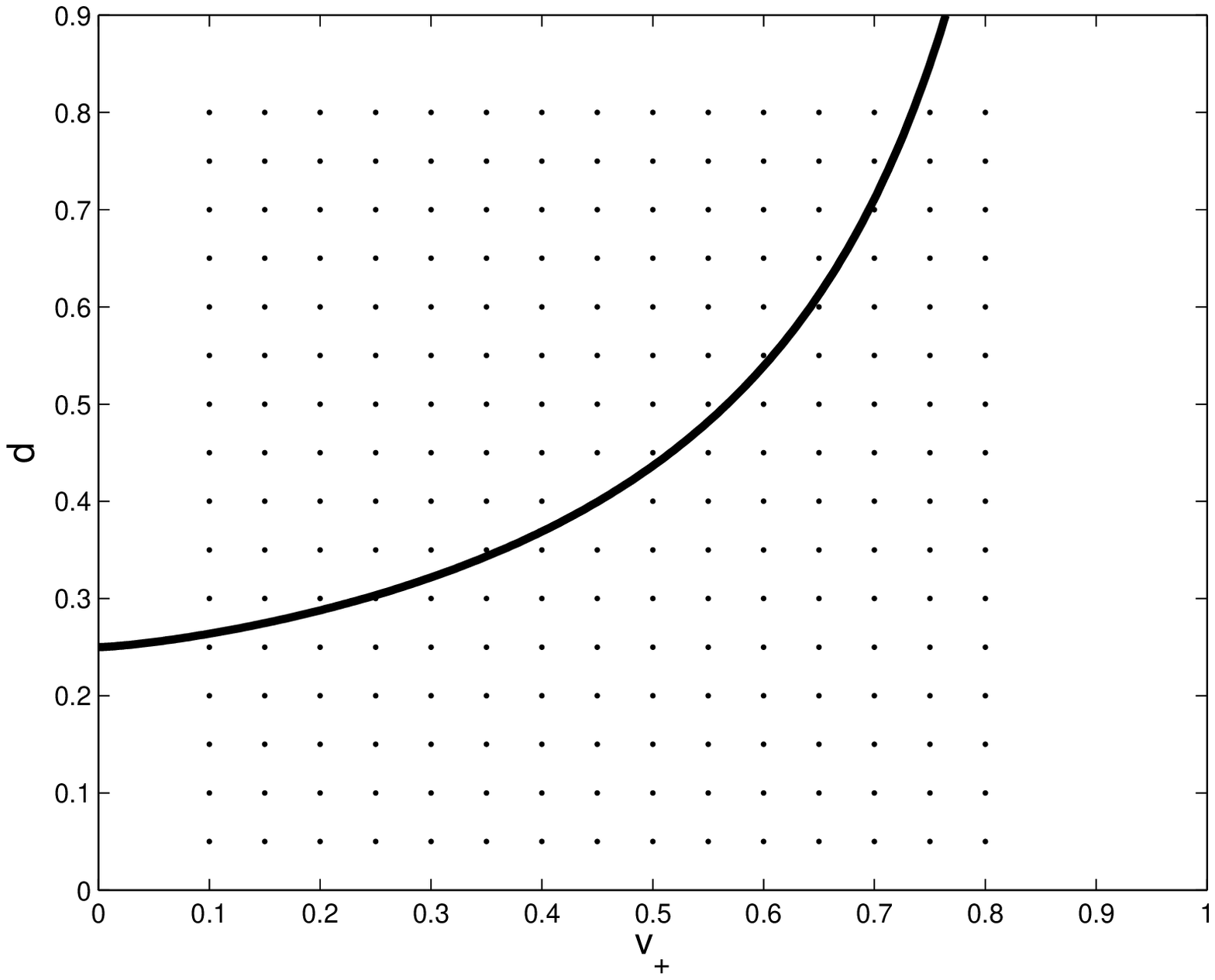} 
\end{center}
\caption{Dots correspond to runs with parameters $(v_+,d)$.  The upward increasing curve corresponds to the critical value $d_*$ between monotone and oscillatory shock profiles.}
\label{param}
\end{figure}

Our experiments were carried out uniformly on the range
\[
(v_+, d)\in [0.10,0.80]\times [0.05,0.80],
\]
with $\gamma=1.4$ (diatomic gas).  In terms of Mach number, this corresponds roughly to $1.15 \leq M \leq 5$, which covers the supersonic range and goes into the hypersonic regime.  Indeed $M\approx 5$ may even go beyond the physical range of the model.  For each $(v_+, d)$ on our grid, we computed the Evans function along a semi-circular contour in the right-half plane of radius $12$ centered at the origin.  Recall that for $d\leq 1/3$, this contains the admissible region of unstable spectrum from our high-frequency bounds.  The ODE calculations for individual values of $\lambda$ were carried out using Matlab's {\tt ode45} routine, which is the adaptive 4th-order Runge-Kutta-Fehlberg method (RKF45).  Typical runs involved between $100$ and $700$ mesh points, with error tolerance set to {\tt AbsTol = 1e-6} and {\tt RelTol = 1e-8}.  Values of $\lambda$ were varied on the semi-circular contour with $70$ points in the first quadrant, 40 on the arc and 30 along the imaginary axis, and then reflected along the real axis due to the conjugate symmetry of the Evans function, that is, $\overline{D(\lambda)}=D(\overline{\lambda})$.

In Figure \ref{runs}, we see a typical run for increasing $v_+$.  Notice that the contour wraps around the origin as the shock strength increases.  Thus it is difficult to conclude stability in the strong shock limit; this is a topic for future consideration.  Note that the graph gets farther away from the origin as the shock strength decreases, thus strongly suggesting stability in the small-amplitude limit.  In Figure \ref{several}, we see this effect more clearly.  In Figure \ref{several2}, we hold the shock strength fixed and vary $d$.  As $d$ approaches zero, we see the contour getting larger and more spread out.  Otherwise output does not seem to vary much in $d$, at least in our region of interest.

The actual parameter values computed were 
\[
(v_+,d) \in \{0.10,0.15,\ldots,0.80\} \times \{ 0.05,0.10,\ldots,0.80\};
\]
see Figure \ref{param}.  In total $240$ runs were conducted, all of which had winding number zero.  This effectively demonstrates spectral stability for monotone and nearly monotone profiles with $d\leq 1/3$ and strongly suggests spectral stability elsewhere in our region of study. Indeed the output is strikingly similar throughout.  Nonetheless, for $d>>1$ our profile becomes highly oscillatory and so it is not unreasonable to expect an instability to occur in the extreme.  This is a good direction for future work.

\section{Discussion and Open Problems}

We note that \eqref{evans_ode} blows up as $v_+\rightarrow 0$, and moreover the eigenvalues get far apart, thus  causing extreme stiffness.  Hence we have numerical difficulties for strong shocks, e.g, $M>>5$.  Difficulties also arise for both large and small values of $d$.  In particular the profile becomes highly oscillatory and numerically intractable for very large values of $d$, and as $d\rightarrow 0$ we likewise have that \eqref{evans_ode} blows up.  Nonetheless, we may be able to demonstrate stability as $d\rightarrow 0$ analytically as a singular limit of the $d=0$ case, which is stable; see \cite{HLZ}.

Slemrod's model is an ideal system for further investigation.  Not only is it physically relevant, and in some sense a canonical viscous-dispersive system, but it also pushes the boundaries of current numerical methods.  While this model has nice features such as monotone profiles, it also has highly complex and numerically taxing obstacles such as highly oscillatory profiles and large spectral separation between modes of \eqref{evans_ode} in the extreme parameter regime.  We intend to study this model further.

\def\ocirc#1{\ifmmode\setbox0=\hbox{$#1$}\dimen0=\ht0 \advance\dimen0
  by1pt\rlap{\hbox to\wd0{\hss\raise\dimen0
  \hbox{\hskip.2em$\scriptscriptstyle\circ$}\hss}}#1\else {\accent"17 #1}\fi}
  \def\cprime{$'$}

\end{document}